\newtheorem{thm}{Theorem}
\newtheorem{lemm}[thm]{Lemma}
\newtheorem{cor}[thm]{Corollary}
\newtheorem{obs}[thm]{Observation}
\newtheorem{conj}[thm]{Conjecture}
\newtheorem{fconj}[thm]{Former conjecture}
\theoremstyle{remark}
\newtheorem{example}{Example}
\newtheorem{remark}{Remark}
\DeclareMathOperator{\ex}{exc} 
\newcommand{\diam}{\text{diam}}
\newcommand{\barG}{\overline{G}} 
\newcommand{\cC}{\mathcal{C}} 
\newcommand{\close}{C_{\barG}} 
\newcommand{\closep}[1]{C_{#1}} 
\newcommand{\isom}{\cong} 
\begin{document}

\title{Structure of betweenness uniform graphs with low values of betweenness centrality}

\author[1]{Babak Ghanbari}
\author[1,2]{David Hartman}
\author[1]{Vít Jelínek}
\author[1,2]{Aneta Pokorná}
\author[1]{Robert Šámal}
\author[3]{Pavel Valtr}

\affil[1]{Computer Science Institute of Charles University, Faculty of Mathematics and Physics, Charles University, Malostransk\'{e} n\'{a}m. 25, Prague 1, 11800, Czech Republic}

\affil[2]{Institute of Computer Science of the Czech Academy of Sciences, Czech Academy of Sciences, Pod Vod\'{a}renskou v\v{e}\v{z}\'{i} 271/2, Prague 8, 18207, Czech Republic}

\affil[3]{Department of Applied Mathematics, Faculty of Mathematics and Physics of Charles University, Malostranské náměstí 25, Prague 1, 11800, Czech Republic}

\date{}
\maketitle

\begin{abstract}
This work deals with undirected graphs that have the same betweenness centrality for each vertex, 
so-called betweenness uniform graphs (or BUGs). The class of these graphs is not trivial and its 
classification is still an open problem. Recently, Gago, Coroničová-Hurajová and Madaras conjectured 
that for every rational $\alpha\ge 3/4$ there exists a BUG having betweenness 
centrality~$\alpha$. We disprove this conjecture, and provide an alternative view of the 
structure of betweenness-uniform graphs from the point of view of their complement. This allows 
us to characterise all the BUGs with betweennes centrality at most 9/10, and show that their 
betweenness centrality is equal to $\frac{\ell}{\ell+1}$ for some integer $\ell\le 9$. We 
conjecture that this characterization extends to all the BUGs with betweenness centrality smaller 
than~1.
\end{abstract}

\textbf{Keywords:}\textit{betweenness-uniform, betweenness centrality, shortest path}

\section{Introduction}
Complex networks represent a model for many complicated real-world phenomena. Vertices of complex networks 
usually represent some objects of interest, such as actors in social networks, nodes in computer 
networks, proteins in their interaction structure, parts of the brain in connectome networks, etc. 
For other examples, see~\cite{barabasi2016network} and references therein. One of the aims of the study of
complex networks is to identify interesting objects and evaluate their importance. This evaluation 
can be done using so-called centrality measures originating from social networks~\cite{Freeman1979centrality}. These 
measures rate vertices' importance in their role in the corresponding network. Historically, 
the primary approach, called degree centrality, is based on the number of neighbors of the 
corresponding vertex. This local approach has been improved many times leading to more elaborate 
measures accounting for broader neighborhoods, such as eigenvector 
centrality~\cite{Bonacich1987power}. As an alternative to the measures based on the size and importance of the neighborhood of vertices, centrality can also be defined as a mediator of communication in the network. Communication 
is expressed as the amount of shortest paths passing through the given vertex. This 
approach is formalized by the notion of betweenness centrality, which counts the overall fraction of shortest paths connecting pairs of vertices passing through the given 
vertex~\cite{freeman1977centrality-measures}. 

To use any centrality measure, it is beneficial to know its typical behavior. Motivated by degree centrality and degree distribution studies resulting in concepts such as scale-freeness, one can ask about the typical behavior of other centrality measures, such as betweenness centrality. A natural starting point is the limiting case when betweenness centrality is the same for all vertices. We will call such structures betweenness-uniform graphs (shortly BUGs). Note that the value of the betweenness centrality of any vertex in a BUG is thus equal to the average betweenness centrality in the graph. 

Characterizing the structure of BUGs is not a trivial task. The class of BUGs includes all vertex-transitive graphs, but not all BUGs are vertex-transitive; indeed, some BUGs are not even regular. We can go even further. Based on the correspondence with strongly regular graphs, we can construct a BUG for any automorphism group~\cite{Gago2013Onbetweenness}. There are several approaches to characterize the betweenness-uniform graphs. One possibility is to find bounds for different characteristics, such as vertex connectivity; see a recent result of Hartman, Pokorná and Valtr~\cite{hartman2024connectivity,hartman2021connectivity}. Another approach is to identify all the possible rational numbers that can occur as the betweenness value of a~BUG. It has been conjectured that all rational numbers greater than $\frac34$ have this property. 

\begin{fconj}[Gago, Coroničová-Hurajová, Madaras 2013 \cite{Gago2013Onbetweenness}]\label{conj}
For any rational value $\alpha$ in the interval $(\frac34, \infty)$, there exists a 
betweenness-uniform graph with betweenness\footnote{This conjecture is mentioned 
in the text~\cite{Gago2013Onbetweenness} after Lemma 2.3. Note that the definition of betweenness 
centrality in the mentioned article considers oriented pairs instead of unoriented ones and thus 
the corresponding values are twice as large, i.e. the interval is $(3/2, \infty)$.} value~$\alpha$.
\end{fconj}

It has already been shown that the only BUGs with the betweenness value of zero are complete graphs 
and that there are no BUGs with betweenness in the interval $(0, 
\frac{1}{2})$~\cite{Gago2013Onbetweenness}. In this work, we examine the structure of BUGs with 
betweenness below one, which we call low-betweenness BUGs. These graphs can be conveniently 
described by the structure of their complements, which we abbreviate as coBUGs. We show that the 
complement of a low-betweenness BUG needs to have fewer edges than vertices, and in fact the 
complement must be disconnected, implying that a low-betweenness BUG has diameter at most~2.

An easy way to construct a low betweenness BUG is to fix integers $k\ge 2$ and $\ell\ge 0$ and 
consider the complement of the graph obtained as a disjoint union of $k$ copies of the star 
$K_{1,\ell}$. This yields a BUG with betweenness value $\frac{\ell}{\ell+1}$. The question remains 
whether there are other examples of low-betweenness BUGs, or even other possible values of 
betweenness smaller than~1. We conjecture that this is not the case. 
\begin{conj}\label{our-conj}
    If $G$ is a low-betweenness BUG, then its complement $\barG$ is a disjoint union of stars of the same size.
\end{conj}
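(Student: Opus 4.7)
The plan is to exploit the diameter-$2$ condition on $G$ noted in the introduction: every non-edge $uv$ of $G$ (equivalently, every edge of $\overline{G}$) contributes $1/c_{uv}$ to the betweenness of each common $G$-neighbour, where $c_{uv}=|V(G)|-|N_{\overline{G}}[u]\cup N_{\overline{G}}[v]|$ counts those common neighbours. Double counting yields $\sum_w B(w)=|E(\overline{G})|$, so in a BUG we have $B(w)\equiv|E(\overline{G})|/|V(G)|$; writing $B(w)=S-T(w)$ with $S=\sum_{e\in E(\overline{G})}1/c_e$ constant and $T(w)=\sum 1/c_{xy}$ restricted to edges $xy\in E(\overline{G})$ having at least one endpoint in $N_{\overline{G}}[w]$, the BUG condition is the assertion that $T(w)$ takes the same value at every~$w$.

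The first substantive step is a key lemma: any component of $\overline{G}$ that contains a leaf is a star. Let $l$ be a leaf in a component $H$ with unique neighbour $u$; then $T(l)$ sums $1/c_e$ only over edges incident to $u$, while $T(u)$ adds a $1/c_e$ contribution from every edge of $\overline{G}$ incident to $N_{\overline{G}}(u)$ but not to $u$. Since all terms are strictly positive, $T(l)=T(u)$ forces there to be no such extra edges, i.e., every $\overline{G}$-neighbour of $u$ is itself a leaf; hence $H=K_{1,d_{\overline{G}}(u)}$. A direct calculation gives $T=\ell/(|V(G)|-\ell-1)$ on a star component $K_{1,\ell}$, strictly monotone in $\ell$, so any two star components are isomorphic, and we obtain the "same size" assertion of the conjecture as soon as every component is proven to be a star.

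The main obstacle is then showing that no component of $\overline{G}$ can have minimum degree at least two. Since $|E(\overline{G})|<|V(G)|$, some component has fewer edges than vertices and is a tree; ignoring the degenerate case in which that tree is $K_1$ (which makes $\overline{G}$ edgeless and yields $G=K_n$, the $\ell=0$ case), it is a star $K_{1,\ell}$ by the leaf lemma. If another component $H$ had minimum degree $\geq 2$, then $|E(H)|\geq|V(H)|$, and the BUG property would force $T(v)=\ell/(|V(G)|-\ell-1)$ for every $v\in H$. The difficulty is that $T$ can be genuinely constant on various dense non-star components (cycles, complete graphs, complete bipartite graphs, and more generally vertex-transitive graphs of small diameter), so one cannot rule them out by a local argument inside $H$; the contradiction must instead come from numerical compatibility with the star value of~$T$.

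To close the argument I would combine the upper bound $T\leq|E(H)|/(|V(G)|-|V(H)|)$, which holds with equality exactly when every edge of $H$ dominates $V(H)$, with the edge budget $|E(H)|\leq|V(H)|+k-1$ forced by $|E(\overline{G})|<|V(G)|$, where $k$ is the number of star components. Together these restrict $H$ to dense graphs of very small diameter, and matching the two expressions for $T$ on such graphs becomes a Diophantine constraint that I expect to admit no solution. This mirrors the explicit case analysis by which the authors already verify the conjecture for $B\leq 9/10$, and producing a single argument that handles all $\ell$ at once seems to be the principal difficulty standing in the way of a full proof.
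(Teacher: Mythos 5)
The statement you are proving is Conjecture~\ref{our-conj}, which the paper explicitly leaves open; it is not proved anywhere in the text, and your proposal does not prove it either. The first half of your argument is sound and matches the paper's actual machinery: your identity $B(w)=S-T(w)$ is Lemma~\ref{lem-weight} (your $T$ is the paper's co-betweenness $coB_{\barG}(w)$), and your leaf lemma is Lemma~\ref{lem-stars}. Note, though, that even this part silently assumes $\diam(G)\le 2$; the paper has to prove this (Theorem~\ref{thm-diam3}, via the spanning double-star characterisation of Lemma~\ref{lem-doublestar}), since without diameter~2 the formula $\sum_w B(w)=|E(\barG)|$ and the whole weight calculus fail.

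The genuine gap is your final paragraph. The claim that matching $T(v)=\ell/(n-\ell-1)$ against the bounds $T\le |E(H)|/(n-|V(H)|)$ and $|E(H)|\le |V(H)|+k-1$ ``becomes a Diophantine constraint that I expect to admit no solution'' is exactly the open problem, and the expectation is not supportable as stated: the paper exhibits coBUGs that combine a non-star component with star components and satisfy all such local and numerical constraints --- a complete multipartite component plus stars (Example~\ref{ex-mt}), a family with betweenness $1+\frac{1}{4t}$ (Example~\ref{ex-above}), and even a \emph{non-uniform} component (an inflated long cycle) glued to $81$ large stars. These examples all have edge-vertex ratio $\ge 1$, so any successful argument must exploit the strict inequality $|E(\barG)|<|V(G)|$ in an essential way, not merely as an edge budget; your sketch gives no mechanism for that. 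The paper itself only closes this step for uniform non-star components (Theorem~\ref{thm-mt}, via an excess-counting argument) and, by case analysis on $\ell\le 8$, for betweenness up to $9/10$ (Theorem~\ref{thm-main}); the general case, including non-uniform components combined with stars $K_{1,\ell}$ for $\ell\ge 9$, remains open. Your proposal should be read as a correct reconstruction of the known partial results, not as a proof of the conjecture.
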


The study of the structure of coBUGs can be viewed from the perspective of combining various types 
of graphs as coBUG components, e.g. Corollary~\ref{cor-no-stars-and-cycles} shows that a star cannot 
be combined with a cycle. For a subinterval $(0, 9/10\rangle$, we show that stars $K_{1,\ell}$ for 
$\ell \in \{1, \dots, 8\}$ cannot be combined with other graphs as connectivity components in a 
coBUG that correspond to a low-betweenness BUG. This implies the next theorem, which is our main 
result.

\begin{thm}\label{thm-main}
Let $G$ be a BUG with betweenness $\alpha\le 9/10$. Then $\alpha=\ell/(\ell+1)$ for some integer $\ell\ge 0$, and the complement of $G$ is a graph whose every connected component is isomorphic to~$K_{1,\ell}$. 
\end{thm}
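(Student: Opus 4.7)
The plan is to pass to the complement $\barG$, use the structural results established earlier for low-betweenness BUGs, and then combine them with the incompatibility analysis promised in the sentence immediately preceding the theorem.

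Since $\alpha\le 9/10<1$, the graph $G$ is a low-betweenness BUG, so by the preceding results $\barG$ is disconnected, has fewer edges than vertices, and $G$ has diameter at most $2$. Write $\barG=H_1\sqcup\cdots\sqcup H_k$ for its decomposition into connected components with $k\ge 2$. Because $G$ has diameter $2$, the betweenness of a vertex $v$ in $G$ is determined by pairs at distance exactly $2$, and these correspond precisely to pairs that are adjacent in $\barG$. This yields an explicit local formula for the betweenness of $v$ in terms of the $\barG$-edge structure of the component $H_i$ containing $v$ and of the total number of vertices outside $H_i$. I would first write down this local formula; it is the main quantitative tool.

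The BUG condition forces the local formula to produce the same value at every vertex across every component. The argument then splits by the shape of the $H_i$. If some component is a star $K_{1,\ell}$ with $\ell\le 8$, the incompatibility result advertised just before the theorem (the strengthening of Corollary~\ref{cor-no-stars-and-cycles} which rules out the coexistence of any such star with a component of different shape or size) forces every $H_j$ to be isomorphic to the same $K_{1,\ell}$, giving exactly the standard construction with betweenness $\ell/(\ell+1)$. Otherwise no component is a small star. One must then show that in fact every $H_i$ is $K_{1,9}$ and $\alpha=9/10$: for a pure union of stars $K_{1,\ell'}$ with $\ell'\ge 9$ the local formula gives $\alpha\ge 9/10$, with equality forcing $\ell'=9$ in every component, while the presence of any non-star component combined with the BUG equality would yield a betweenness value strictly greater than $9/10$, contradicting $\alpha\le 9/10$.

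The main obstacle is this last subcase --- showing that any non-star component $H_i$ drives $\alpha$ above $9/10$. This requires extracting from the BUG equality strong rigidity constraints inside each component (for example, the local formula at a pendant vertex must equal the local formula at an internal vertex of the same $H_i$), and then ruling out every non-star possibility by a finite case analysis on small components together with a counting estimate forcing larger or denser components to overshoot the $9/10$ bound. The prototype is the star--cycle prohibition of Corollary~\ref{cor-no-stars-and-cycles}, and the task is to extend this style of argument uniformly to every other possible component shape, which is where the bulk of the technical work will lie.
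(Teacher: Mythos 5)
Your skeleton matches the paper's: pass to $\barG$, invoke the exclusion of small star components (Lemmas~\ref{lem-K04}--\ref{lem-K8}) so that any star component coexisting with a non-star component satisfies $\ell\ge 9$, and then argue that a non-star component would push the betweenness above $9/10$. The first branch of your case split is sound: those lemmas say precisely that a low-betweenness coBUG containing $K_{1,\ell}$ with $\ell\le 8$ is not exotic, hence is a disjoint union of copies of that same star, giving $\alpha=\ell/(\ell+1)$.

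The genuine gap is in the step you yourself flag as the ``main obstacle,'' and the plan you sketch for it points in the wrong direction. Showing that a non-star component forces $\alpha>9/10$ does not require rigidity constraints inside components or a finite case analysis over non-star shapes --- that road amounts to attacking Conjecture~\ref{our-conj}, which the paper leaves open. What closes the step in a few lines is the \emph{global} identity of Lemma~\ref{lem-density}: for a graph of diameter at most $2$ one has $B(G)=|E(\barG)|/n$ exactly, so the betweenness is literally the edge-vertex ratio of the complement. By Lemma~\ref{lem-stars} every tree component of a low-betweenness coBUG is a star, so a non-star component is not a tree and contributes at least as many edges as vertices (the paper even gets excess at least $2$ via Lemma~\ref{lem-exc2}, but $m\ge t$ already suffices). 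Writing $e/n=(\ell S+m)/\bigl((\ell+1)S+t\bigr)$ with $m\ge t\ge 1$ and $\ell\ge 9$, a mediant computation gives $e/n>\ell/(\ell+1)\ge 9/10$, the desired contradiction. The per-vertex ``local formula'' you propose (Lemma~\ref{lem-weight}, Corollary~\ref{cor-weight}) is the right tool for proving the small-star exclusion lemmas you black-box, but it is not what finishes the theorem; without the equality case of Lemma~\ref{lem-density} your argument does not terminate.
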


This disproves Conjecture~\ref{conj}, and provides partial support for Conjecture~\ref{our-conj} by showing that any counterexample for the conjecture must have betweenness within the interval $(9/10,1)$. As further support for Conjecture~\ref{our-conj}, we also show 
that the complement of any counterexample to the conjecture must contain a component which is a star on  at least 10 vertices together with a component of a very irregular structure; see Subsection~\ref{ssec-mt} and Theorem~\ref{thm-mt} for the precise statement.

It appears that the structure of BUGs with betweenness less than one is relatively simple to describe in 
terms of their complements, and in particular, Conjecture~\ref{our-conj} implies that for a given 
$\alpha<1$ and a given number of vertices~$n$, there is up to isomorphism only one BUG on $n$ 
vertices with betweenness~$\alpha$. The picture becomes much more complicated as soon as the value 
$\alpha=1$ is reached. For instance, it can be easily verified that any graph whose complement is a 
disjoint union of cycles of length at least 4 (not necessarily all of the same length) is a BUG 
with betweenness 1~(see Example~\ref{ex-cycles}). But these are not the only BUGs of betweenness~1; 
in fact, for any complete multipartite graph $H$ with more edges than vertices, we can construct a 
BUG $G$ of betweenness 1 whose complement $\barG$ contains $H$ as a connected component, with all 
the other components of $\barG$ being stars (see Example~\ref{ex-mt}). Moreover, we also 
construct a sequence of BUGs whose betweenness values converge to 1 from above 
(Example~\ref{ex-above}). 

\section{Preliminaries}
At the beginning of this section, we introduce all the definitions and notation together with basic notions, ideas, and observations leading to deeper insight into the problem and into the motivation behind the key concepts. These basic insights will be also frequently used in the following sections, which contain our main results.

\subsection{Definitions and notation}
Throughout this article, a network is represented by an unoriented graph $G$ with a set of vertices 
$V(G)$ and a set of edges $E(G) \subseteq \binom{V(G)}{2}$. For simplicity, in most cases, 
we will denote the edge between the vertices $x,y \in V(G)$ simply as $xy \in E(G)$ instead of the 
more accurate $\{x,y\}\in E(G)$. The {\it complement} $\barG$ of a graph $G$ is the graph with vertex 
set $V(G)$ and edge set $\binom{V(G)}{2} \setminus E(G)$. 
The symbol $\isom$ denotes graph isomorphism. 
The number of vertices of the currently discussed graph is denoted by~$n$.
The set of vertices adjacent to a vertex $v$ is denoted by $N(v)$, and $N[v]$ denotes the set $N(v) \cup \{v\}$. The {\it degree} of a vertex $v$, denoted by $\deg(v)$, is equal to~$|N(v)|$.
The {\it maximum degree} of $G$, denoted $\Delta(G)$, is $\max_{v \in V(G)} \deg(v)$.

The {\it betweenness centrality} (or just \emph{betweenness)}  
of a vertex $x \in V(G)$ is defined as\footnote{Note that some authors use an alternative 
definition of betweenness that considers ordered pairs. In that case, the summation is across 
all $(v,w) \in V(G) \times V(G)$ such that $u \neq v$. The resulting value of betweenness is two 
times larger than the value corresponding to the definition we use.} 
\begin{equation*}
    B_G(x) = \sum_{\{v,w\} \subseteq V(G)} \frac{\sigma_{vw}(x)}{\sigma_{vw}},
\end{equation*}
where $\sigma_{vw}$ denotes the number of shortest paths between $v$ and $w$ and $\sigma_{vw}(x)$ 
is the number of such paths containing $x$ as an internal 
vertex~\cite{freeman1977centrality-measures}. In particular, $\sigma_{vw}(x) = 0$ whenever $x = v$ 
or $x = w$. If the graph $G$ is clear from context, we write $B(x)$ instead of~$B_G(x)$.
Betweenness centrality can be also defined for the whole graph $G$ by taking the average betweenness of its vertices, i.e.,
$$B(G) = \frac{\sum_{x \in V(G)} B(x)}{|V(G)|}.$$
We say that a graph $G$ is a \emph{betweenness-uniform graph} (or shortly a \emph{BUG}), if all its 
vertices have the same betweenness, that is, $B(x) = B(y)$ for all $x,y \in V(G)$. Note that in such 
a case, the betweenness of the whole graph is identical to the betweenness of any vertex.
The complement $\barG$ of a BUG $G$ will be referred to as a {\it coBUG}.

In this text, we will focus on betweenness-uniform graphs with values of betweenness smaller 
than~1. We call such graphs \emph{low-betweenness BUGs}. The complement of a low-betweenness BUG is 
called a \emph{low-betweenness coBUG}.

\subsection{Bounds for betweenness via edge-vertex ratio}

When dealing with BUGs, especially BUGs with low betweenness, it is often more natural to focus
on the properties of the corresponding coBUG rather than the BUG itself. This is illustrated by the 
next lemma, which we will frequently use throughout the rest of the paper.
\begin{lemm}\label{lem-density}
For a connected graph $G$ on $n$ vertices, we have $B(G) \ge \frac{|E(\barG)|}{n}$, with 
equality if and only if $G$ has diameter at most~2.
\end{lemm}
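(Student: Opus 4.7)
The plan is to unfold the definition of $B(G)$, swap the order of summation, and reduce everything to a distance sum over vertex pairs. Concretely, I would start from
\[
n\cdot B(G) \;=\; \sum_{x\in V(G)} B(x) \;=\; \sum_{x\in V(G)}\sum_{\{v,w\}\subseteq V(G)} \frac{\sigma_{vw}(x)}{\sigma_{vw}}
\]
and exchange the two sums, pulling the factor $1/\sigma_{vw}$ out of the inner sum over $x$.

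The key observation is that for a fixed pair $\{v,w\}$ with $v\neq w$, summing $\sigma_{vw}(x)$ over all vertices $x$ counts pairs (shortest $vw$-path, internal vertex on that path). Each of the $\sigma_{vw}$ shortest paths between $v$ and $w$ contains exactly $d(v,w)-1$ internal vertices, so $\sum_x \sigma_{vw}(x) = \sigma_{vw}\,(d(v,w)-1)$. Substituting back, the $\sigma_{vw}$ factors cancel and we obtain the clean identity
\[
n\cdot B(G) \;=\; \sum_{\{v,w\}\subseteq V(G)} \bigl(d(v,w)-1\bigr).
\]

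Now I would split the pairs $\{v,w\}$ according to whether they form an edge of $G$ or an edge of $\barG$. Pairs with $d(v,w)=1$ contribute zero, while every pair with $d(v,w)\ge 2$ is precisely an edge of $\barG$ and contributes $d(v,w)-1 \ge 1$. Hence
\[
n\cdot B(G) \;=\; \sum_{\{v,w\}\in E(\barG)} \bigl(d(v,w)-1\bigr) \;\ge\; |E(\barG)|,
\]
which gives the required inequality after dividing by $n$. (Connectivity of $G$ ensures every distance is finite so that the sum is well defined.)

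For the equality case, the inequality in the last display is tight exactly when $d(v,w)-1 = 1$ for every $\{v,w\}\in E(\barG)$, i.e.\ when every non-adjacent pair in $G$ is at distance exactly $2$. This is equivalent to $\diam(G)\le 2$. No real obstacle is expected here; the only thing to be careful about is correctly identifying $\sum_x \sigma_{vw}(x)$ with $\sigma_{vw}(d(v,w)-1)$, which just uses that internal vertices of a shortest path are counted once per path.
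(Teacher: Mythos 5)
Your proposal is correct and follows essentially the same route as the paper's own proof: the same double-counting identity $\sum_x \sigma_{vw}(x) = (d(v,w)-1)\sigma_{vw}$, the same exchange of summation, and the same reduction to the sum of $d(v,w)-1$ over the edges of $\barG$, with the identical equality analysis. No gaps.
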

\begin{proof}
Let $G=(V,E)$ be a graph, let $\barG=(V,\overline{E})$ be its complement, and let 
$n:=|V|$. For two distinct vertices $x,y\in V$, let $d_G(x,y)$ be the distance of $x$ and $y$ in 
$G$, i.e., the number of edges on the shortest $ xy$ path in~$G$. Recall that $\sigma_{xy}$ is the 
number of shortest $xy$-paths, and $\sigma_{xy}(u)$ is the number of those shortest $xy$-paths that 
contain $u$ as an internal vertex. Notice that for any two distinct vertices $x,y\in V$, 
\[
\sum_{u\in V} \sigma_{xy}(u)=(d_G(x,y)-1)\sigma_{xy},
\]
since both sides of the equation count the number of pairs $(u,P)$ where $P$ is a shortest 
$xy$-path and $u$ is an internal vertex of~$P$. We then have
\begin{align*}
B(G)&=\frac{1}{n}\sum_{u\in V} B(u)\\
&=\frac{1}{n}\sum_{u\in V} \sum_{\{x,y\}\in\binom{V}{2}}\frac{\sigma_{xy}(u)}{\sigma_{xy}}\\
&=\frac{1}{n}\sum_{\{x,y\}\in\binom{V}{2}} \sum_{u\in V}\frac{\sigma_{xy}(u)}{\sigma_{xy}}\\
&=\frac{1}{n}\sum_{\{x,y\}\in\binom{V}{2}}(d_G(x,y)-1)\\
&=\frac{1}{n}\sum_{\{x,y\}\in \overline{E}}(d_G(x,y)-1)\\
&\ge \frac{|\overline{E}|}{n},
\end{align*}
where the last inequality holds with equality if and only if any two distinct nonadjacent vertices 
of $G$ have distance two, i.e.,  if and only if $G$ has diameter at most two.
\end{proof}

For a graph with $m$ edges and $n$ vertices, the ratio $\frac{m}{n}$ is referred to as the 
\emph{edge-vertex ratio}. It is precisely half of the average degree of the graph. 
Lemma~\ref{lem-density} then shows that the betweenness of a BUG $G$ is lower-bounded by the 
edge-vertex ratio of its complement, with equality for BUGs of diameter~two.

\begin{cor}\label{cor-coBUG-density}
    If $G$ is a low-betweenness BUG, then $\barG$ has fewer edges than vertices.
\end{cor}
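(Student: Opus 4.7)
The plan is to derive this corollary as a direct consequence of Lemma \ref{lem-density}. Assuming $G$ is connected (which is the natural reading here, since Lemma \ref{lem-density} requires connectedness and the surrounding discussion about ``diameter at most $2$'' implicitly treats $G$ as connected), I would proceed in a single step: apply Lemma \ref{lem-density} to obtain
\[
\frac{|E(\barG)|}{n}\;\le\; B(G),
\]
and combine this with the hypothesis $B(G)<1$ (which is the definition of a low-betweenness BUG) to conclude $|E(\barG)|/n<1$, i.e.\ $|E(\barG)|<n$. Multiplying through by the integer $n$ and reading the inequality strictly gives exactly the stated conclusion.

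There is no real obstacle, since all the substantive work has already been done inside Lemma \ref{lem-density}. The only point to watch is the hypothesis of that lemma: if one wanted to handle a possibly disconnected low-betweenness BUG, a separate argument would be needed (a disconnected BUG decomposes into connected BUGs of the same betweenness, so one could apply the lemma componentwise and handle the cross-edges between components). I expect the authors to either treat connectedness as implicit in the definition of a low-betweenness BUG, or to invoke componentwise reasoning, but in either case the proof is essentially a one-line chaining of inequalities.
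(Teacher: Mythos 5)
Your proposal is correct and matches the paper's (implicit) argument exactly: the corollary is stated as an immediate consequence of Lemma~\ref{lem-density}, obtained by chaining $|E(\barG)|/n\le B(G)<1$. Your side remark about connectedness is consistent with the paper's conventions, since the paper treats disconnected graphs as not being BUGs at all (it explicitly notes that a graph whose complement is disconnected ``is therefore not a BUG''), so connectedness is indeed implicit in the hypothesis.
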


This corollary means that for a low-betweenness BUG, the only possible corresponding coBUG is either a tree or a disconnected graph. The informal message of this section is that low-betweenness BUGs are always relatively dense.



\subsection{Low-betweenness BUGs with diameter at least three}\label{ssec-diam3}
In this section, we show that any low-betweenness BUG is of diameter at most two, thus 
proving Theorem~\ref{thm-diam3}.
This result, together with Lemma~\ref{lem-density}, simplifies our study of low-betweenness BUGs. 
The key part is realizing that complements of graphs with diameter at least three have a specific form.

\begin{lemm}\label{lem-doublestar}
A connected graph $G$ satisfies $\diam(G) \geq 3$ if and only if $\barG$ is connected and 
contains a spanning tree which is a double star.
\end{lemm}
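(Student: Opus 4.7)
The plan is to prove both directions by using a pair of vertices at distance $\ge 3$ as the pair of centers of the double star.

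For the forward direction, suppose $G$ is connected with $\diam(G)\ge 3$, and pick vertices $x,y\in V(G)$ with $d_G(x,y)\ge 3$. Then $xy\in E(\barG)$ since $x,y$ are non-adjacent in $G$. Moreover, I will argue that every other vertex $z\in V(G)\setminus\{x,y\}$ is adjacent in $\barG$ to at least one of $x,y$: if $z$ were adjacent to both $x$ and $y$ in $G$, the path $x$--$z$--$y$ would show $d_G(x,y)\le 2$, contradicting the choice of $x,y$. Hence for each such $z$ we can select one of the edges $xz$ or $yz$ from $E(\barG)$ (preferring $xz$, say). Together with the edge $xy$, these selected edges form a spanning double star in $\barG$ with centers $x$ and $y$, and this in particular witnesses that $\barG$ is connected.

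For the reverse direction, suppose $\barG$ is connected and has a spanning subtree $T$ that is a double star with centers $u$ and $v$. Then $uv\in E(T)\subseteq E(\barG)$, so $u$ and $v$ are non-adjacent in~$G$. I claim $d_G(u,v)\ge 3$. Suppose for contradiction that $d_G(u,v)=2$, so there is some vertex $w\notin\{u,v\}$ adjacent in $G$ to both $u$ and $v$. Then $uw\notin E(\barG)$ and $vw\notin E(\barG)$, so $w$ is not adjacent in $\barG$ to either center of the double star. This contradicts the fact that $T$ is a spanning double star with centers $u,v$, since every non-center vertex of such a tree must be a leaf attached to $u$ or $v$. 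Therefore $d_G(u,v)\ge 3$, giving $\diam(G)\ge 3$.

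I do not anticipate a serious obstacle: the argument is essentially a direct translation between ``distance at least 3 in $G$'' and ``no common $G$-neighbor'' on one side, and the combinatorial structure of a double star on the other. The one place to be careful is checking that the selected edges in the forward direction cover every vertex exactly once as a leaf, so that the result is genuinely a spanning tree (double star) rather than a subgraph with cycles or isolated vertices; this follows because each non-center vertex is assigned to exactly one of $x$ or $y$, and the edge $xy$ connects the two stars into a single tree on all $n$ vertices.
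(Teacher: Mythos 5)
Your proof is correct and follows essentially the same approach as the paper: both directions translate ``$d_G(x,y)\ge 3$'' into ``$xy\notin E(G)$ and no common $G$-neighbor,'' which is exactly the existence of a spanning double star in $\barG$ with centers $x,y$. The paper phrases this as a single stronger biconditional about a fixed pair of vertices, but the content is identical to yours.
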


\begin{proof}
We prove a stronger version of this lemma saying that two vertices $a$ and $b$ have a mutual distance of at least three in $G$ if and only if $\barG$ contains a spanning tree which is a double star with the two centers $a$ and $b$.

First, suppose that $a$ and $b$ have mutual distance at least three in $G$. Then $ab$ is not an edge in $G$ and thus it is an edge of $\barG$. Similarly, for any vertex $c\ne a,b$, the graph $G$ does not contain the path $acb$ and therefore $\barG$ contains at least one of the edges $ac$ and $cb$.
It follows that $\barG$ contains a spanning double star with the two centers $a$ and $b$.

Suppose now that $\barG$ contains a spanning double star with the two centers $a$ and $b$.
Then $G$ contains neither an edge $ab$ nor any path $acb$, since in the latter case $c$ would not be connected by an edge to any of the two centers of the spanning double star in $\barG$, which is impossible. It follows that $a$ and $b$ have  distance at least three in $G$.
This completes the proof.
\end{proof}

The spanning double star is inevitable in the complement of any graph of diameter at least three. This property allows us to show that any coBUG containing it must have the edge-vertex ratio at least one. Using this corollary together with Lemma~\ref{lem-density} we can show that there are no low-betweenness BUGs of diameter at least~three.

\begin{thm}\label{thm-diam3}
If $G$ is a connected BUG of diameter at least 3, then $B(G)\ge 1$.
\end{thm}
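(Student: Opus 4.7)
The plan is to combine the two previous lemmas almost directly, but refining the inequality in Lemma~\ref{lem-density} to exploit the fact that the diameter is strictly larger than~2. Since $\diam(G) \ge 3$, I can fix two vertices $a, b$ with $d_G(a,b) \ge 3$; by Lemma~\ref{lem-doublestar}, $\barG$ then contains a spanning double star with centers $a$ and $b$, so in particular $|E(\barG)| \ge n-1$.

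Next I would return to the identity derived inside the proof of Lemma~\ref{lem-density}, namely
\[
n \cdot B(G) = \sum_{\{x,y\} \in \binom{V}{2}} (d_G(x,y)-1) = \sum_{\{x,y\}\in E(\barG)} (d_G(x,y)-1),
\]
where the second equality uses that pairs joined by an edge of $G$ contribute zero. Every nonadjacent pair in $G$ has $d_G(x,y)\ge 2$, hence every summand is at least $1$. Crucially, the term indexed by $\{a,b\}$ is at least $2$, because $d_G(a,b)\ge 3$ and $\{a,b\}\in E(\barG)$.

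Putting this together gives
\[
n\cdot B(G) \;\ge\; 2 + (|E(\barG)|-1) \;=\; |E(\barG)|+1 \;\ge\; (n-1)+1 \;=\; n,
\]
so $B(G)\ge 1$, as required.

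There is no real obstacle: the argument is a one-line bookkeeping refinement of Lemma~\ref{lem-density}, in which the single ``extra'' unit of distance coming from the diameter-$\ge 3$ pair $\{a,b\}$ exactly compensates for the fact that the spanning double star provided by Lemma~\ref{lem-doublestar} has only $n-1$ edges rather than $n$. The only thing to be careful about is making sure to reuse the \emph{pair-by-pair} formula from Lemma~\ref{lem-density}'s proof rather than its weaker corollary, since the corollary alone gives only $B(G)\ge (n-1)/n$.
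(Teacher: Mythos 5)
Your proof is correct, but it takes a genuinely different route from the paper's. The paper argues by contradiction: assuming $B(G)<1$, Lemma~\ref{lem-density} forces $|E(\barG)|<n$, which together with the spanning double star provided by Lemma~\ref{lem-doublestar} means $\barG$ is exactly a double star; the two centers of that double star then have betweenness $0$ in $G$, and betweenness-uniformity forces $B(G)=0$, so $G$ is complete, contradicting $\diam(G)\ge 3$. Your argument never invokes betweenness-uniformity at all: you reuse the pair-by-pair identity $n\,B(G)=\sum_{\{x,y\}\in E(\barG)}(d_G(x,y)-1)$ from inside the proof of Lemma~\ref{lem-density}, bound each summand below by $1$ and the summand of a diameter-realizing pair by $2$, and combine this with $|E(\barG)|\ge n-1$ coming from the spanning double star. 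This proves the strictly stronger statement that \emph{every} connected graph of diameter at least $3$ has average betweenness at least $1$ (tight for $P_4$), with the BUG hypothesis playing no role. What the paper's route buys is that it works only with the stated lemmas plus the known fact that the only BUGs of betweenness $0$ are complete graphs, while yours must reach into the proof of Lemma~\ref{lem-density} for the exact identity rather than its weaker corollary --- and you are right that this is necessary, since the lemma's inequality alone gives only $B(G)>(n-1)/n$. Both arguments are valid; yours is more general and, in my view, cleaner.
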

\begin{proof}
    Suppose for contradiction that $G=(V,E)$ is a BUG of diameter at least 3 with $B(G)<1$. By Lemma~\ref{lem-doublestar}, the complement $\barG=(V,\overline{E})$ is connected and contains a spanning double star. By Lemma~\ref{lem-density}, we know that $\frac{|\overline{E}|}{|V|}\le B(G)<1$, and in particular, $|\overline{E}|<|V|$. It follows that $\barG$ is a tree, hence $\barG$ itself is a double star. Let $a$ and $b$ be the two centers of the double star~$\barG$. We may easily check that $G$ does not contain any shortest path whose internal vertex is $a$ or~$b$. Consequently, $B(a)=B(b)=0$, and since $G$ is a BUG, we have $B(G)=0$. However, the only connected BUGs of betweenness equal to 0 are  complete graphs, whose diameter is equal to~1. This contradiction completes the proof.
\end{proof}

\subsection{BUGs with diameter two}\label{ssec-diam2}

Lemma \ref{lem-density} implies that if a BUG $G$ has low betweenness, then its complement 
$\barG$ has fewer edges than vertices. This is our motivation for defining the {\it edge excess} of 
a graph $H$ as $\ex(H) := |E(H)| - |V(H)|$. Thus, complements of low-betweenness BUGs have negative 
edge excess.

This shows, in particular, that complements of low-betweenness BUGs are generally rather sparse, 
and therefore the BUGs themselves rather dense. It is therefore easier and more natural, when 
dealing with low-betweenness BUGs, to describe the structure of the corresponding coBUGs.

Theorem~\ref{thm-diam3} shows that in our search for low-betweenness BUGs, we may restrict our 
attention to graphs of diameter at most~2. Graphs of diameter 1 are trivial: they are the complete graphs, which as we know are betweenness-uniform with betweenness equal to~0. Henceforth, we will therefore focus on graphs of diameter~2.

Fortunately, it turns out that for a graph $G$ of diameter 2, betweenness uniformity can be easily 
characterised in terms of the complement~$\barG$. We now introduce the required terminology and 
state the characterisation.

Let $H=(V,E)$ be a graph. We say that
a vertex $v\in V$ is {\it close} to an edge $e=xy \in E$, if $v$ is adjacent to at least one 
endpoint of $e$; in particular, the two endpoints $x$ and $y$ of $e$ are close to~$e$. Let 
$\closep{H}(e)$ be the set of vertices close to the edge $e$, and let $\closep{H}(v)$ be the set of 
edges that are close to the vertex~$v$. Notice that $\closep{H}(e)$ is equal to $N_{H}(x) \cup 
N_{H}(y)$, and that $\closep{H}(v)$ contains precisely those edges that are incident to at least 
one vertex in $N_H(v)$,

\begin{example}\label{ex-closeness}
Let $H\isom C_k$ be a graph isomorphic to a cycle of length $k \geq 4$. For any edge $e$ of 
the cycle, we can compute $|\closep{H}(e)| = 4$, since we have two end vertices of the edge and 
each of them has one neighbor. Similarly, for any vertex $v$, $|\closep{H}(v)| = 4$, since each of 
the two neighbors has one edge back to vertex $v$ and one to the next neighbor.
\end{example}

Let $H=(V,E)$ be a graph on $n$ vertices. For an edge $e\in E$, we define its \emph{weight} as 
\[
 w_H(e)=\frac{1}{n-|\closep{H}(e)|}.
\]
In case $\closep{H}(e)=n$, i.e., if the edge $e$ is close to all the vertices, we leave $w_H(e)$ 
undefined. Note that this only happens when $H$ contains as a subgraph a double-star with central 
edge $e$, and in view of Lemma~\ref{lem-doublestar}, the complement of $H$ then has diameter at 
least~$3$. Such situation will not occur in our applications.

We further define the \emph{co-betweenness} of a vertex $x$ as 
\[
 coB_H(x)=\sum_{e \in \closep{H}(x)} w_H(e),
\]
i.e., the co-betweenness of a vertex is the total weight of the edges close to it.
If the graph $H$ being considered is clear from the context, we write $\closep{}(e)$, $w(e)$ and 
$coB(x)$, instead of $\closep{H}(e)$, $w_H(e)$ and $coB_H(x)$. In all our applications, the role of 
$H$ will be played by the complement $\barG$ of a (potential) BUG~$G$. The relevance of these 
notions is explained by the next lemma.

\begin{lemm}\label{lem-weight}
 Let $G=(V,E)$ be a graph of diameter at most 2, and let $x$ be a vertex of~$G$. Let $\barG$ be the
complement of~$G$. The betweenness $B_G(x)$ of the vertex $x$ satisfies
\begin{equation}
 B_G(x)=\left(\sum_{e \in E(\barG)} w_{\barG}(e)\right) - coB_{\barG}(x).
\end{equation}\label{eq-weight}
\end{lemm}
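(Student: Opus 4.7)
The plan is to compute $B_G(x)$ directly from the definition, observe that in a graph of diameter at most $2$ only non-edges of $G$ give nonzero contributions, and then reshape the resulting sum into the desired edge-weight form.

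First I would note that since $\diam(G)\le 2$, any two distinct $v,w\in V$ satisfy $d_G(v,w)\in\{1,2\}$, and pairs with $d_G(v,w)=1$ contribute nothing to $B_G(x)$ because their unique shortest path has no internal vertex. So only pairs $\{v,w\}$ with $vw\in E(\barG)$ matter. For such a pair, every shortest $vw$-path has length two and is uniquely determined by its interior vertex, which must be a common $G$-neighbour of $v$ and $w$. Consequently $\sigma_{vw}$ equals the number of common $G$-neighbours of $v$ and $w$, and $\sigma_{vw}(x)\in\{0,1\}$ is $1$ iff $x$ is such a common neighbour. The key identity I would then prove is
\[
\sigma_{vw} \;=\; n - |C_{\barG}(vw)| \qquad \text{for every } vw\in E(\barG),
\]
since a vertex $u$ fails to be a common $G$-neighbour of $v$ and $w$ precisely when $u\in\{v,w\}$ or $uv\in E(\barG)$ or $uw\in E(\barG)$; because $vw\in E(\barG)$, the endpoints $v,w$ already lie in $N_{\barG}(v)\cup N_{\barG}(w)$, so the excluded set is exactly $C_{\barG}(vw)$. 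In particular $\sigma_{vw}\ge 1$, so $w_{\barG}(vw)=1/\sigma_{vw}$ is well-defined, and the same reasoning shows $\sigma_{vw}(x)=1$ iff $x\notin C_{\barG}(vw)$.

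Combining these facts,
\[
B_G(x) \;=\; \sum_{e=vw\in E(\barG)} \frac{\sigma_{vw}(x)}{\sigma_{vw}}
       \;=\; \sum_{\substack{e\in E(\barG)\\ x\notin C_{\barG}(e)}} w_{\barG}(e)
       \;=\; \sum_{e\in E(\barG)} w_{\barG}(e) \;-\; \sum_{\substack{e\in E(\barG)\\ x\in C_{\barG}(e)}} w_{\barG}(e).
\]
To finish I would invoke the symmetry $x\in C_{\barG}(e) \iff e\in C_{\barG}(x)$, valid because both assert that $x$ is $\barG$-adjacent to an endpoint of $e$ (the endpoint case being subsumed: if $x$ is an endpoint of $e\in E(\barG)$, then $x$ is $\barG$-adjacent to the other endpoint). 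The remaining sum becomes $coB_{\barG}(x)$, and the lemma follows.

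I do not anticipate any genuine obstacle; the argument is essentially bookkeeping. The step most in need of care is the identity $\sigma_{vw}=n-|C_{\barG}(vw)|$, where one must rely on the convention that the endpoints of an edge are themselves close to it, so that the "forbidden" set of vertices coincides exactly with $C_{\barG}(vw)$ rather than with a set that is off by one or two.
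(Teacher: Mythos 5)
Your argument is correct and follows essentially the same route as the paper's proof: restrict to pairs forming edges of $\barG$, identify $\sigma_{vw}=n-|\closep{\barG}(vw)|$ and $\sigma_{vw}(x)=1$ iff $x\notin\closep{\barG}(vw)$, and rearrange the sum using the closeness symmetry between vertices and edges. Your explicit attention to the endpoint convention in $\closep{\barG}(e)$ is exactly the point where care is needed, and you handle it correctly.
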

\begin{proof}
Since $G$ has diameter at most 2, for any pair of distinct vertices $e=\{v,w\}$, there can be at 
most one shortest path from $v$ to $w$ passing through the vertex $x$, and such a path exists 
if and only if the following two conditions hold:
\begin{itemize}
 \item[(a)] $e$ is not an edge of $G$, i.e., $e\in E(\barG)$, and
 \item[(b)] both edges $xv$ and $xw$ are in $G$, or equivalently, the vertex $x$ 
is not close to the edge $e$ in~$\barG$. 
\end{itemize}
It follows that for a pair of distinct vertices $e=\{v,w\}$, the number of shortest 
paths from $v$ to $w$ in $G$ is equal to 1 if $e$ is an edge of $G$, and is equal to 
$n-|\closep{\barG}(e)|$ if $e$ is an edge of~$\barG$.

We may then compute the betweenness of $x$ as follows:
\begin{align*}
 B_G(x)&=\sum_{\{v,w\} \subseteq V} \frac{\sigma_{vw}(x)}{\sigma_{vw}}\\
 &=\sum_{\{v,w\} \in E(\barG)} \frac{\sigma_{vw}(x)}{\sigma_{vw}}\\
&=\sum_{e=\{v,w\} \in E(\barG)} \frac{\sigma_{vw}(x)}{n-|\closep{\barG}(e)|}\\
&=\sum_{e=\{v,w\} \in E(\barG)} w_{\barG}(e)\sigma_{vw}(x)\\
&=\sum_{e\in E(\barG)\setminus\closep{\barG}(x)} w_{\barG}(e) &\text{by conditions (a) and (b) above}\\
&=\left(\sum_{e \in E(\barG)} w_{\barG}(e)\right) - \sum_{e \in \closep{\barG}(x)} w_{\barG}(e)\\
&=\left(\sum_{e \in E(\barG)} w_{\barG}(e)\right) - coB_{\barG}(x)
\end{align*}
as claimed.
\end{proof}

\begin{cor}\label{cor-weight}
A graph $G$ of diameter at most 2 is betweenness-uniform if and only if in its complement $\barG$, 
all the vertices have the same co-betweenness.
\end{cor}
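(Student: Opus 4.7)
The plan is to deduce the statement directly from Lemma~\ref{lem-weight}. That lemma expresses the betweenness of each vertex $x$ of $G$ as
\[
 B_G(x)=\left(\sum_{e \in E(\barG)} w_{\barG}(e)\right) - coB_{\barG}(x),
\]
and the first summand depends only on $\barG$, not on the choice of~$x$. Consequently, the map $x\mapsto B_G(x)$ is constant on $V(G)$ if and only if the map $x\mapsto coB_{\barG}(x)$ is constant on $V(\barG)=V(G)$, which is exactly the claimed equivalence. So the main step is simply to observe that the $x$-dependence in Lemma~\ref{lem-weight} is confined to the co-betweenness term and then invert the formula.

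The one point needing attention is that the edge weights $w_{\barG}(e)$ must all be well-defined for the formula above to be meaningful. A weight $w_{\barG}(e)$ is undefined precisely when $|\closep{\barG}(e)|=n$, which, as noted in the text preceding Lemma~\ref{lem-weight}, means that $\barG$ contains a spanning double star with central edge~$e$. By Lemma~\ref{lem-doublestar} this would force $\diam(G)\ge 3$, contradicting the hypothesis $\diam(G)\le 2$. Hence no edge of $\barG$ is problematic, Lemma~\ref{lem-weight} applies verbatim to every vertex, and the corollary follows. I do not foresee any real obstacle here; the work has been done in Lemma~\ref{lem-weight}, and this is just the bookkeeping consequence.
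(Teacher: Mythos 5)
Your proof is correct and matches the paper's (implicit) argument: the corollary is stated there without proof precisely because it follows immediately from Lemma~\ref{lem-weight} by observing that the sum $\sum_{e \in E(\barG)} w_{\barG}(e)$ is independent of~$x$. Your additional check that the weights are well-defined under the hypothesis $\diam(G)\le 2$ is a sensible piece of diligence that the paper handles in the text preceding the lemma.
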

Note that to determine the co-betweenness of a vertex $x$, we only need to know the weights of the 
edges close to $x$, and to determine the weight of an edge $e$, we only need to know the total 
number of vertices in the graph and the number of vertices close to~$e$. Thus, to determine the 
betweenness-uniformity of a graph $G$ by inspecting its complement $\barG$, we only need to look 
at the local structure in $\barG$, with the only `global' property needed being the total number of 
vertices. This of course only works when $G$ has diameter at most 2; however, in the examples we 
encounter, the complement $\barG$ will typically be a disconnected graph, which automatically 
guarantees that $G$ has diameter at most~2.  

Corollary~\ref{cor-weight} allows us to restrict the possible structure of graphs that may appear 
as connected components of a disconnected coBUG.

\begin{lemm}\label{lem-close-inclusion}
Let $H$ be a graph with two vertices $x,y$ such that $\closep{H}(x)\subsetneq\closep{H}(y)$. Then 
$H$ cannot be a connected component of any disconnected coBUG.
\end{lemm}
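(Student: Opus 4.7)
The plan is to argue by contradiction: assume $H$ occurs as a connected component of some disconnected graph $\barG$ whose complement $G$ is a BUG, and then exhibit two vertices of $\barG$ with different co-betweennesses, contradicting Corollary~\ref{cor-weight}.

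First I would verify that the disconnectedness of $\barG$ forces $\diam(G)\le 2$, so that Corollary~\ref{cor-weight} applies. Indeed, if $u,v$ are nonadjacent in $G$, they lie in the same component of $\barG$, and any vertex $w$ in a different component of $\barG$ is adjacent in $G$ to both $u$ and $v$, yielding a path $uwv$ of length~$2$. Thus $G$ is a BUG precisely when every vertex of $\barG$ has the same co-betweenness, and it suffices to produce a single pair of vertices violating this.

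Next, because $H$ is a connected component of $\barG$, every edge of $\barG$ incident to a vertex of $H$ lies inside $H$, so $\closep{\barG}(x)=\closep{H}(x)$ and $\closep{\barG}(y)=\closep{H}(y)$. Using the strict inclusion hypothesis, I would write
\[
 coB_{\barG}(y)-coB_{\barG}(x)=\sum_{e\in \closep{\barG}(y)\setminus\closep{\barG}(x)} w_{\barG}(e),
\]
and note that the sum ranges over a nonempty set. For each such edge $e$, its closeness set in $\barG$ is contained in $V(H)$, which is a proper subset of $V(\barG)$ since $\barG$ is disconnected; hence $|\closep{\barG}(e)|<|V(\barG)|$ and the weight $w_{\barG}(e)$ is a well-defined positive number. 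The right-hand side is therefore strictly positive, giving $coB_{\barG}(y)>coB_{\barG}(x)$, contradicting Corollary~\ref{cor-weight}.

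The argument is short, and the only subtle point is really bookkeeping: making sure the weights $w_{\barG}(e)$ in the sum are well-defined (which uses disconnectedness of $\barG$ in an essential way) and that passing from $\closep{H}$ to $\closep{\barG}$ preserves the strict inclusion (which uses that $H$ is an entire component, not just a subgraph). Beyond those two verifications, no further calculation is needed.
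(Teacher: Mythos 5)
Your proof is correct and follows essentially the same route as the paper's: both deduce $coB_{\barG}(x)<coB_{\barG}(y)$ from the strict inclusion of closeness sets and contradict Corollary~\ref{cor-weight}. The paper states this in two lines, leaving implicit the bookkeeping you spell out (that disconnectedness of $\barG$ gives $\diam(G)\le 2$, that $\closep{H}=\closep{\barG}$ on a full component, and that the weights involved are well-defined and positive); your extra care is harmless and arguably an improvement.
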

\begin{proof}
Suppose $\barG$ is a disconnected coBUG with a component~$H$. The strict inclusion 
$\closep{H}(x)\subsetneq\closep{H}(y)$ implies that $coB_{\barG}(x)<coB_{\barG}(y)$, which is 
impossible in a coBUG.
\end{proof}

\begin{example}\label{ex-cycles}
Let $G$ be a graph on $n$ vertices. Suppose that its complement $\barG$ is a disconnected graph 
whose every component is a cycle of length at least~4. Then $G$ is a BUG of betweenness~1. Indeed, 
in $\barG$, each edge is close to four vertices, and hence has weight $\frac{1}{n-4}$, and each 
vertex is close to four edges, and hence has co-betweenness $\frac{4}{n-4}$. Since $\barG$ is 
disconnected, $G$ has diameter at most~2, and since all the 
vertices of $\barG$ have the same co-betweenness, $G$ is a BUG by Corollary~\ref{cor-weight}. To 
see that $G$ has betweenness 1, we may either use Lemma~\ref{lem-weight}, or better yet, 
Lemma~\ref{lem-density}.
\end{example}

An easy way to obtain graphs where all the vertices have the same co-betweenness is to ensure that 
each vertex is close to the same number of edges, and each edge is close to the same number of 
vertices. A graph $H$ is \emph{$(m,t)$-uniform} if for any vertex $v \in V(H)$, $|\closep{H}(v)| = 
m$, and for any edge $e \in E(H)$, $|\closep{H}(e)| = t$. Clearly, a graph is $(m,t)$-uniform if 
and only if each of its components is $(m,t)$-uniform. We say that a graph is \emph{uniform}, if it 
is $(m,t)$-uniform for some values of $m$ and~$t$.

\begin{obs}\label{obs-coB-mt-unif}
    In an $(m,t)$-uniform graph on $n$ vertices, every vertex has co-betweenness $\frac{m}{n-t}$. 
In particular, every disconnected uniform graph is a coBUG by Corollary~\ref{cor-weight}.
\end{obs}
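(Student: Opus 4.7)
The proof is essentially an unpacking of the definitions, so my plan is to proceed directly without any clever maneuver. First I would compute the weight of an arbitrary edge $e \in E(H)$ using the $(m,t)$-uniformity: since $|\closep{H}(e)| = t$ for every edge, the value $w_H(e) = \frac{1}{n - |\closep{H}(e)|} = \frac{1}{n-t}$ is the same for every edge of~$H$. (One should also note that $t < n$ holds here, since if $\closep{H}(e) = V(H)$ for some edge $e$, then $H$ would contain a spanning double star, and by Lemma~\ref{lem-doublestar} its complement would have diameter at least~3; but this will not happen in the disconnected case considered in the second part of the statement.)

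Next I would plug this uniform weight into the definition of co-betweenness. For any vertex $x \in V(H)$, the set $\closep{H}(x)$ has exactly $m$ elements by $(m,t)$-uniformity, and each element has weight $\frac{1}{n-t}$, so
\[
coB_H(x) = \sum_{e \in \closep{H}(x)} w_H(e) = m \cdot \frac{1}{n-t} = \frac{m}{n-t},
\]
which proves the first claim.

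For the second claim, let $H$ be a disconnected uniform graph and set $G := \barH$. Because $H$ is disconnected, any two vertices lying in different components of $H$ are adjacent in $G$, which is enough to ensure that $G$ has diameter at most~$2$. (Any two vertices in the same component of $H$ have a common neighbor in another component of $H$, hence a path of length~$2$ in~$G$.) The first part of the statement shows that all vertices of $H = \barG$ share the common co-betweenness $\frac{m}{n-t}$, so Corollary~\ref{cor-weight} immediately yields that $G$ is a BUG, i.e.\ that $H$ is a coBUG. The only point requiring any care is the verification that $G$ indeed has diameter at most~$2$, which is needed in order to apply Corollary~\ref{cor-weight}; otherwise the argument is a direct substitution.
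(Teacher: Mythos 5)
Your proof is correct and follows exactly the direct definitional computation that the paper leaves implicit in stating this as an Observation: uniform edge weight $\frac{1}{n-t}$, summed over the $m$ close edges, plus the standard remark that disconnectedness of $\barG$ forces $G$ to have diameter at most~2 before invoking Corollary~\ref{cor-weight}. Your added care about the $t<n$ degenerate case is a reasonable bonus, not a deviation.
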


For future use, we remark that $(m,t)$-uniformity determines the edge-vertex ratio:

\begin{lemm}\label{lem-unifdens}
Any $(m,t)$-uniform graph $H$ satisfies $\frac{m}{t}=\frac{|E(H)|}{|V(H)|}$.
\end{lemm}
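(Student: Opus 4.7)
The proof is a straightforward double-counting argument, so the plan is essentially immediate. I would count the cardinality of the set
\[
S = \{(v,e) \in V(H) \times E(H) : v \text{ is close to } e \text{ in } H\}
\]
in two ways.

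First, grouping the pairs by their vertex coordinate gives $|S| = \sum_{v \in V(H)} |\closep{H}(v)|$, and the $(m,t)$-uniformity assumption on vertices rewrites this as $m \cdot |V(H)|$. Second, grouping by the edge coordinate gives $|S| = \sum_{e \in E(H)} |\closep{H}(e)|$, which by $(m,t)$-uniformity on edges equals $t \cdot |E(H)|$. Equating the two expressions yields $m \cdot |V(H)| = t \cdot |E(H)|$, and dividing through by $t\cdot |V(H)|$ gives the claimed identity $\tfrac{m}{t} = \tfrac{|E(H)|}{|V(H)|}$ (note that $t \geq 2$ since any edge is close at least to its two endpoints, so dividing by $t$ is legal).

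There is no real obstacle here, since the closeness relation between vertices and edges is symmetric in the sense that a pair $(v,e)$ belongs to $S$ regardless of which coordinate we view first; the only thing worth verifying is that the two summations truly count the same set, which is immediate from the definitions of $\closep{H}(v)$ and $\closep{H}(e)$ given earlier in the preliminaries.
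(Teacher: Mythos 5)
Your proof is correct and is essentially identical to the paper's: both count the pairs $(v,e)$ with $e\in\closep{H}(v)$ in two ways to obtain $|V(H)|\,m=|E(H)|\,t$. The extra remark about $t\ge 2$ is a harmless bonus the paper omits.
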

\begin{proof}
Count in two ways the number of pairs $(v,e)\in V(H) \times E(H)$ such that $e\in \closep{H}(v)$ to 
get $|V(H)|m=|E(H)|t$.
\end{proof}

A special case of uniform graphs are graphs in which every edge is close to every vertex. 
Note that such graphs are precisely the complete multipartite graphs:

\begin{obs}\label{obs-multipar}
 For a graph $H=(V,E)$ the following properties are equivalent:
 \begin{itemize}
  \item $H$ is $(m,t)$-uniform with $m=|E|$ and $t=|V|$,
  \item every edge of $H$ is close to every vertex of $H$,
  \item $H$ has no induced subgraph isomorphic to $K_2+K_1$ (i.e., the disjoint union of $K_2$ and 
$K_1$), 
\item $H$ is complete multipartite (i.e., its complement is a disjoint union of cliques).
 \end{itemize}
\end{obs}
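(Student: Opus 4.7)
The plan is to establish the four-way equivalence via a short chain of implications, each of which is essentially an unpacking of a definition, with the exception of (3)$\Leftrightarrow$(4), which I would reduce to a classical fact about cluster graphs. For (1)$\Leftrightarrow$(2), note that every graph $H$ trivially satisfies $|\closep{H}(e)|\le|V(H)|$ and $|\closep{H}(v)|\le|E(H)|$, so the $(m,t)$-uniformity of the first bullet with $m=|E|$ and $t=|V|$ forces both inequalities to be tight for every edge and every vertex. Tightness in $|\closep{H}(e)|=|V|$ says precisely that every vertex is close to every edge, giving bullet (2); conversely, if every edge is close to every vertex, then $|\closep{H}(e)|=|V|$ for each edge and, by the double-count $|V|m=|E|t$ from Lemma~\ref{lem-unifdens}, also $|\closep{H}(v)|=|E|$ for each vertex, yielding bullet (1).

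For (2)$\Leftrightarrow$(3), observe that a triple $\{x,y,v\}$ induces a $K_2+K_1$ in $H$ (with edge $xy$) exactly when $v\neq x,y$ and $v$ has no neighbour in $\{x,y\}$, which is precisely the statement that $v$ is not close to the edge $xy$. Hence the forbidden induced subgraph condition of (3) is a word-for-word restatement of (2).

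For (3)$\Leftrightarrow$(4), I would invoke the well-known fact that a graph is a disjoint union of cliques if and only if it contains no induced $P_3$; the nontrivial direction is immediate from the observation that two non-adjacent vertices in the same component are joined by a shortest path whose first two edges induce a $P_3$. Since $H$ is complete multipartite by definition iff $\barH$ is a disjoint union of cliques, and since $\overline{P_3}\isom K_2+K_1$, the absence of induced $K_2+K_1$ in $H$ is equivalent to the absence of induced $P_3$ in $\barH$, which is in turn equivalent to $H$ being complete multipartite.

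There is no real obstacle here: each implication is either a direct translation of the definitions or a one-line invocation of the $P_3$-free characterization of cluster graphs. The only subtlety worth double-checking is the simultaneous saturation of the two cardinality inequalities in the (1)$\Leftrightarrow$(2) step, which is why the appeal to Lemma~\ref{lem-unifdens} (or an equivalent double-count) is convenient.
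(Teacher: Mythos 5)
Your proof is correct; the paper states this as an observation with no proof at all, and your chain of implications is exactly the standard argument the authors leave to the reader. One small remark on the only step you flagged as subtle: you do not need the double count of Lemma~\ref{lem-unifdens} (which is anyway stated only for graphs already known to be uniform, so citing it there is slightly circular) --- the relation ``$v$ is close to $e$'' is symmetric, i.e.\ $v\in\closep{H}(e)$ iff $e\in\closep{H}(v)$, so ``every edge is close to every vertex'' immediately gives both $|\closep{H}(e)|=|V|$ for every edge and $|\closep{H}(v)|=|E|$ for every vertex.
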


Note that a connected complete multipartite graph with at least two vertices cannot be a coBUG, 
despite being uniform, since its complement is disconnected and therefore not a BUG; 
however, complete multipartite graphs, and connected uniform graphs in general, are natural 
candidates for connected components of coBUGs. We will later see, however, that to obtain new 
examples of low-betweenness BUGs, we must consider coBUGs with components that are not 
uniform.

\section{Low-betweenness BUGs}

We will now focus on the specific properties of BUGs with betweenness less than~1. We know from 
Theorem~\ref{thm-diam3} that such BUGs have diameter at most 2, and therefore the results of 
Subsection~\ref{ssec-diam2}, in particular Lemma~\ref{lem-weight} and Corollary~\ref{cor-weight}, 
apply to them. 

By Lemma~\ref{lem-density}, the complement of a low-betweenness BUG has fewer edges than vertices, 
and in particular, at least one of its components must be a tree. We show that the only trees that 
can occur are stars.

\begin{lemm}\label{lem-stars}
 Let $\barG$ be a low-betweenness coBUG on $n$~vertices. Then 
\begin{itemize}
  \item at least one connected component of $\barG$ is a tree,
  \item any component of $\barG$ containing a vertex of degree at most 1 (in particular any tree 
component) is isomorphic to a star $K_{1,\ell}$ for some $\ell$, and
  \item all the star components of $\barG$ are pairwise isomorphic, i.e., they are all stars of the 
same size.
\end{itemize}
\end{lemm}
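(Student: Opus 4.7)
The plan is to exploit the characterization of betweenness-uniformity on diameter-two graphs given by Corollary~\ref{cor-weight}: since Theorem~\ref{thm-diam3} ensures $\diam(G)\le 2$, it suffices to force constancy of $coB_{\barG}$ across vertices. The first bullet follows by a direct edge count: Corollary~\ref{cor-coBUG-density} yields $|E(\barG)|<n$, and each connected component on $n_i$ vertices carries at least $n_i-1$ edges, so some component must saturate this bound and hence be a tree.

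For the second bullet, I take $H$ to be a component of $\barG$ with a vertex $v$ of degree at most one. If $\deg(v)=0$ then $H\cong K_{1,0}$, so I may assume $v$ has a unique neighbor $u$ in $\barG$. Since $N_{\barG}(v)=\{u\}$, the set $\closep{\barG}(v)$ consists of precisely the edges incident to~$u$, and this is trivially contained in $\closep{\barG}(u)$. I would then suppose for contradiction that some neighbor $w\neq v$ of $u$ has a further neighbor $z\neq u$: the edge $wz$ lies in $\closep{\barG}(u)\setminus\closep{\barG}(v)$, because $w\in N_{\barG}(u)$ yet $u\notin\{w,z\}$. This makes the inclusion $\closep{\barG}(v)\subsetneq\closep{\barG}(u)$ strict, and since $\diam(G)\le 2$ guarantees that every weight $w_{\barG}(e)$ is well defined and strictly positive, we obtain $coB_{\barG}(v)<coB_{\barG}(u)$, contradicting Corollary~\ref{cor-weight}. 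Hence every neighbor of $u$ is a leaf attached only to $u$, forcing $H\cong K_{1,\deg(u)}$.

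For the third bullet, I would compute $coB_{\barG}(c)$ for the center $c$ of a star component $K_{1,\ell}$ directly: every edge of the star is adjacent to every vertex of the star, so it has $\ell+1$ close vertices and weight $1/(n-\ell-1)$; summing over the $\ell$ star-edges yields $coB_{\barG}(c)=\ell/(n-\ell-1)$. This expression is strictly monotone in~$\ell$, so two star components of distinct sizes would give distinct co-betweenness values at their centers, once more contradicting Corollary~\ref{cor-weight}; the degenerate case $\ell=0$ (an isolated vertex, with co-betweenness $0$) is absorbed by the same monotonicity. I do not anticipate a substantial obstacle; the most delicate point is the clean identification of the edge $wz$ witnessing the strict inclusion in the second bullet, which depends on the close-to relation going through neighborhoods and on $v$'s unique-neighbor condition ruling out $w=v$.
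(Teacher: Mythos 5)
Your proposal is correct and follows essentially the same route as the paper's own proof: the first bullet by the edge count from Corollary~\ref{cor-coBUG-density}, the second by exhibiting an edge witnessing a strict inclusion of close-sets at a degree-one vertex and its neighbor (the paper packages this as Lemma~\ref{lem-close-inclusion}), and the third by equating the co-betweenness $\ell/(n-\ell-1)$ of star centers and using its strict monotonicity in~$\ell$. The only cosmetic difference is that you justify constancy of co-betweenness via Theorem~\ref{thm-diam3} and Corollary~\ref{cor-weight} directly rather than via the disconnectedness hypothesis of Lemma~\ref{lem-close-inclusion}, which is, if anything, slightly cleaner.
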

\begin{proof}
 Since $\barG$ has fewer edges than vertices, at least one of its components must be a tree. In 
particular, at least one component of $\barG$ contains a vertex of degre at most 1. Let 
$H$ be such a component, and let $x$ be a vertex of degree at most 1 in~$H$. If $x$ has degree 0, 
then $coB(x)=0$, and by Corollary~\ref{cor-weight}, all the vertices of $\barG$ have co-betweenness 
0, which implies that $\barG$ has no edges and the lemma holds with $\ell=0$. 

Suppose now that $x$ has degree 1, and let $y$ be 
the vertex adjacent to~$x$. Note that the set $\close(x)$ contains precisely the edges incident 
to~$y$. If $H$ is not a star, then $H$ contains edges not belonging to $\close(x)$, and at least 
one such edge is close to~$y$. It follows that $\close(x)\subsetneq\close(y)$, which is impossible 
by Lemma~\ref{lem-close-inclusion}. Therefore $H$ must be a star, i.e., isomorphic to $K_{1,\ell}$ 
for some~$\ell$.

It follows that any vertex in $H$ has co-betweenness $\frac{\ell}{n-(\ell+1)}$, and therefore all 
the vertices of $\barG$ have this co-betweenness as well. Suppose that $\barG$ 
contains another tree component $H'$, necessarily isomorphic to a star $K_{1,m}$ for some~$m$. Then 
$\frac{\ell}{n-(\ell+1)}=\frac{m}{n-(m+1)}$, since all the vertices in $\barG$ have the same 
co-betweenness, and this implies $\ell=m$. We conclude that all the star components of $\barG$ are 
pairwise isomorphic.
\end{proof}

Observe that for any $k\ge 2$ and $\ell\ge 0$, the graph obtained by taking $k$ disjoint copies of 
the star $K_{1,\ell}$ is a coBUG, whose complement is a BUG of betweenness $\frac{\ell}{\ell+1}$. 
Our Conjecture~\ref{our-conj} states that this is the only way to construct a low-betweenness BUG. 
Let us call a BUG $G$ \emph{exotic} if it is a low-betweenness BUG whose complement is not a 
disjoint union of stars of the same size; thus, Conjecture~\ref{our-conj} equivalently states that 
there are no exotic BUGs. The complement of an exotic BUG is referred to as \emph{an exotic 
coBUG}.

In the two following subsections, we prove two results which represent partial progress 
towards Conjecture \ref{our-conj}. First, in Subsection~\ref{ssec-mt}, we show that no exotic coBUG 
may contain a uniform component other than a star. Thus, any construction of an exotic coBUG must 
combine some number of star components of the same size (due to Lemma~\ref{lem-stars}), together 
with at least one component which is not uniform.

Next, in Subsection~\ref{ssec-small}, we proceed towards Conjecture \ref{our-conj} by showing that 
no exotic coBUG may contain a star component $K_{1,\ell}$ for $\ell\le 8$. This implies that any
component in a potential exotic coBUG is either a star with at least 10 vertices, or a non-uniform 
connected graph which is not a tree, and therefore has at least as many edges as vertices. Overall, 
such an exotic coBUG has edge-vertex ratio greater than 9/10. By Lemma~\ref{lem-density}, this 
means that any exotic BUG has betweenness more than 9/10. This line of reasoning is an outline of 
the proof of Theorem~\ref{thm-main}, which is presented in more detail at the end of the section.

The results of Subsection~\ref{ssec-small} are proven by means of a case analysis, where we 
separately consider stars $K_{1,\ell}$ for values of $\ell=0,\dotsc,8$, and show that they cannot 
be combined with non-uniform graphs to form a coBUG. There is no reason why this approach could not 
be pushed further to values of $\ell$ greater than 8, thus improving the bound 9/10 from 
Theorem~\ref{thm-main}. However, the arguments become more tedious and lengthy as $\ell$ increases, 
so it might be perhaps more worthwhile to look for a wholly different approach, with the aim of 
solving the full Conjecture~\ref{our-conj}.

\subsection{No exotic coBUGs with uniform components}\label{ssec-mt}

Our next goal is to show that there can be no exotic coBUG with a non-star uniform component. We 
first derive a collection of equalities and inequalities restricting the possible ranges of 
parameters $n,m,t,\ell$ for which one may have a coBUG on $n$ vertices combining a component 
$K_{1,\ell}$ with a non-star $(m,t)$-uniform component.

\begin{lemm}\label{lem-starH}
Let $\barG$ be a coBUG on $n$ vertices with a component $K_{1,\ell}$ for some $\ell$, and another 
component $H$ not isomorphic to $K_{1,\ell}$. Suppose $H$ is $(m,t)$-uniform. Then the following 
holds:
\begin{enumerate}
\item $n=\ell+1 +\ell\cdot \frac{\ell+1 - t}{m-\ell}$,
\item $\ell\cdot\frac{\ell+1 - t}{m-\ell}\ge t$,
\item $m>\ell$ and $t<\ell+1$, and
\item $\ell(\ell+1)\ge mt$.
\end{enumerate}
In particular, if $H$ is a complete multipartite graph, which implies that $m=|E(H)|$ and 
$t=|V(H)|$, then the third item above says that $K_{1,\ell}$ has fewer edges but more 
vertices than $H$.
\end{lemm}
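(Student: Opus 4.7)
I would convert the coBUG condition into a single equation in $n,\ell,m,t$, and then read off the four items by algebraic rearrangement, with one structural observation about uniform trees being the only substantive input. Since $\barG$ has at least two components it is disconnected, so its complement has diameter at most~$2$, and Corollary~\ref{cor-weight} tells me that every vertex of $\barG$ shares the same co-betweenness. In the star $K_{1,\ell}$ every edge $e$ satisfies $\closep{\barG}(e)=V(K_{1,\ell})$, so $w_{\barG}(e)=\tfrac{1}{n-\ell-1}$, and every vertex of the star is close to all $\ell$ of its edges; its co-betweenness is thus $\tfrac{\ell}{n-\ell-1}$. By Observation~\ref{obs-coB-mt-unif}, every vertex of $H$ has co-betweenness $\tfrac{m}{n-t}$. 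Equating these and clearing denominators yields
\[
\ell(n-t)=m(n-\ell-1),
\]
which (assuming $m\neq\ell$) rearranges directly into item~1.

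The heart of the argument is item~3. My plan is to first establish the auxiliary claim that every $(m,t)$-uniform tree is a star. In a tree, adjacent vertices share no common neighbour, so for each edge $xy$ one has $t=|N(x)\cup N(y)|=\deg(x)+\deg(y)$. If the tree is not a star it has diameter at least~$3$ and hence contains a path $v_0 v_1 v_2 v_3$ with $v_0,v_3$ leaves; applying the identity to the edges $v_0v_1$ and $v_1v_2$ forces $\deg(v_2)=1$, contradicting that $v_2$ is internal on the path. Granting the claim, I would then suppose for contradiction that $m\le\ell$. The displayed equation immediately forces $t\ge\ell+1$, so by Lemma~\ref{lem-unifdens} one has $|E(H)|/|V(H)|=m/t\le\ell/(\ell+1)<1$; combined with connectedness this forces $H$ to be a tree, hence a star $K_{1,m}$, which is $(m,m+1)$-uniform. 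Matching $t=m+1$ against $t\ge\ell+1$ and $m\le\ell$ pins down $m=\ell$ and $H\isom K_{1,\ell}$, contradicting the hypothesis. Hence $m>\ell$, and then the same equation gives $t<\ell+1$, completing item~3.

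Items~2 and~4 reduce to counting. Since the star and $H$ are disjoint components of $\barG$, we have $n\ge(\ell+1)+|V(H)|$; and for any edge $e$ of $H$ one has $|V(H)|\ge|\closep{H}(e)|=t$, so $n\ge\ell+1+t$. Substituting $n-\ell-1\ge t$ into the formula in item~1 gives item~2. Item~4 follows by clearing the (now positive) denominator in item~2: $\ell(\ell+1-t)\ge t(m-\ell)$ rearranges to $\ell(\ell+1)\ge mt$. The ``in particular'' clause is immediate from Observation~\ref{obs-multipar}, which gives $m=|E(H)|$ and $t=|V(H)|$ for complete multipartite $H$. The only nontrivial obstacle I anticipate is the auxiliary claim that $(m,t)$-uniform trees are stars; the rest is routine algebra on the single equation extracted from the co-betweenness equality.
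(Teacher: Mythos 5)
Your proof is correct, and its core coincides with the paper's: both extract the single equation $\ell(n-t)=m(n-(\ell+1))$ from the equality of co-betweennesses (Corollary~\ref{cor-weight} together with Observation~\ref{obs-coB-mt-unif}), rearrange it into item~1, obtain item~2 from $n\ge\ell+1+|V(H)|\ge\ell+1+t$, and get item~4 by clearing the (positive) denominator. The one genuine divergence is how the tree alternative is excluded when establishing $m>\ell$ and $t<\ell+1$. The paper observes that $m\le\ell$ forces $|E(H)|<|V(H)|$ by Lemma~\ref{lem-unifdens}, so $H$ is a tree, and then invokes Lemma~\ref{lem-stars} to say every tree component of the coBUG is a star isomorphic to $K_{1,\ell}$. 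You instead prove a self-contained combinatorial fact: in a tree every edge $xy$ satisfies $|\closep{H}(xy)|=\deg(x)+\deg(y)$, so a uniform tree of diameter at least $3$ would have an internal vertex of degree $1$; hence every uniform tree is a star, and then $t=m+1$ together with $t\ge\ell+1$ and $m\le\ell$ forces $H\isom K_{1,\ell}$, a contradiction. Your route is slightly more robust: Lemma~\ref{lem-starH} is stated for arbitrary coBUGs, while Lemma~\ref{lem-stars} is formally stated only for \emph{low-betweenness} coBUGs (the bullets actually needed do hold without that hypothesis, but the paper applies the lemma outside its stated scope), and your argument sidesteps this entirely at the cost of one elementary claim. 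Like the paper, you pass silently over the degenerate case $\ell=0$, where one cannot divide by $\ell$ to deduce $t\ge\ell+1$; there the star vertex has co-betweenness $0$, which forces $m=0$ and $H\isom K_1\isom K_{1,0}$, again a contradiction, so nothing is lost.
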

\begin{proof}
Recall that co-betweenness of a vertex is $coB(v) = \sum_{e \in \close(v)} w(e)$ and that in 
disconnected coBUGs, the co-betweenness of any two vertices must be the same by 
Corollary~\ref{cor-weight}.
In the component of $\barG$ isomorphic to a star $K_{1, \ell}$, each vertex has co-betweenness 
$\frac{\ell}{n-(\ell+1)}$, while in the component~$H$, each vertex has co-betweenness 
$\frac{m}{n-t}$ by Observation~\ref{obs-coB-mt-unif}. We thus get the identity
\begin{equation}
\frac{\ell}{n-(\ell+1)}=\frac{m}{n-t}.
\end{equation}\label{eq-lnmt}

We first claim that $m\neq\ell$. Indeed, if we had $m=\ell$, then the above equality would imply 
$t=\ell+1$, and by Lemma~\ref{lem-unifdens}, this would mean that $H$ has fewer edges than vertices 
and is therefore a tree. But by Lemma~\ref{lem-stars}, all the tree components of $\barG$ must be 
isomorphic to $K_{1,\ell}$, contradicting our assumptions about~$H$.

Knowing that $m\neq \ell$, we can manipulate the identity \eqref{eq-lnmt} further:
\begin{align*}
&&\frac{\ell}{n-(\ell+1)}&=\frac{m}{n-t}\\
\iff&&\ell(n-t)&=m(n-(\ell+1))\\
\iff&&n(\ell-m)&=\ell t- m(\ell+1)\\
\iff&&n&=\frac{m(\ell+1)-\ell t}{m-\ell}=\frac{m(\ell+1)-\ell(\ell+1)+\ell(\ell+1)-\ell t}{m-\ell}\\
\iff&&n&=\ell+1+\ell\cdot \frac{\ell+1 - t}{m-\ell}.
\end{align*}
This proves the first claim of the lemma. Since $\barG$ has at least one component isomorphic to 
$K_{1,\ell}$ and another isomorphic to $H$, we know that $n\ge \ell+1+|V(H)|$. Since $H$ has at 
least $t$ vertices, this yields $n=\ell+1 +\ell\cdot \frac{\ell+1 - t}{m-\ell}\ge \ell+1+t$, from 
which we may deduce the second claim. 

From the second claim, we know in particular that $\ell\cdot \frac{\ell+1 - t}{m-\ell}>0$. This 
means that we have either $(\ell+1)<t$ and $m<\ell$, or $(\ell+1)>t$ and $m>\ell$. But the first 
alternative would again mean that $t\ge m+2$, i.e. $H$ has more vertices than edges, so $H$ is a 
tree, but this is again impossible by Lemma~\ref{lem-stars}. So the second alternative must be 
correct, proving the third claim. 

The final claim is deduced from the second and third one:
\begin{align*}
&&\ell\cdot\frac{\ell+1 - t}{m-\ell}&\ge t\\
\iff&&\ell\cdot\frac{\ell+1 - t}{m-\ell}- t&\ge 0\\
\iff&&\frac{\ell(\ell+1) -\ell t - t(m-\ell)}{m-\ell}&\ge 0\\
\iff&&\frac{\ell(\ell+1) - mt}{m-\ell}&\ge 0\\
\iff&&\ell(\ell+1) - mt&\ge 0\\
\iff&&\ell(\ell+1)&\ge mt.\\
\end{align*}
\end{proof}

As an example of a simple application of the previous lemma, let us derive the following useful 
consequence.

\begin{cor}\label{cor-no-stars-and-cycles}
If a coBUG contains a component isomorphic to a star $K_{1,\ell}$ then it cannot contain a 
component isomorphic to a cycle. This is because a cycle is either $(3,3)$-uniform (for $C_3$) or 
$(4,4)$-uniform (for $C_k$ with $k\ge 4$), and the third part of Lemma~\ref{lem-starH} would imply 
either $\ell<3<\ell+1$ or $\ell<4<\ell+1$, which is impossible.
\end{cor}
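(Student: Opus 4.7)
The plan is to plug the cycle component into Lemma~\ref{lem-starH}, treating it as the uniform component $H$, and to derive a contradiction directly from the third item of that lemma.

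First I would pin down the $(m,t)$-parameters of a cycle. For $C_k$ with $k\ge 4$, Example~\ref{ex-closeness} already records $(m,t)=(4,4)$. For $C_3$ the verification is immediate and can be done in one line: each edge is incident to two of the three vertices, and the remaining vertex is adjacent to both endpoints, so $|\close(e)|=3$; symmetrically, each vertex has two neighbors whose incident edges together exhaust all three edges of the triangle, so $|\close(v)|=3$. Hence $C_3$ is $(3,3)$-uniform.

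Next I would apply the third part of Lemma~\ref{lem-starH}, which forces $m>\ell$ and $t<\ell+1$, i.e., $\ell<m$ and $\ell\ge t$ (both inequalities being integer). Substituting $(m,t)=(3,3)$ in the triangle case or $(m,t)=(4,4)$ in the longer cycle case yields $\ell<m=t\le\ell$, an evident contradiction in both cases. This rules out the coexistence of a star component $K_{1,\ell}$ with any cycle component in a coBUG.

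There is no real obstacle here; the only point that requires a moment's care is that Example~\ref{ex-closeness} was stated for cycles of length at least 4, so the triangle needs a separate sentence. The remainder of the argument is a direct algebraic incompatibility of the form $m=t$ with the strict inequality $\ell<m$ together with $\ell\ge t$.
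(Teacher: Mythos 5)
Your proof is correct and follows essentially the same route as the paper: compute the $(m,t)$-uniformity parameters of $C_3$ and of $C_k$ for $k\ge 4$, then apply the third item of Lemma~\ref{lem-starH} to get the integer incompatibility $\ell<m=t<\ell+1$. The only addition is your explicit one-line check that $C_3$ is $(3,3)$-uniform, which the paper asserts without verification; this is a harmless and accurate supplement.
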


Now we can proceed to the main result of this subsection.

\begin{thm}\label{thm-mt}
An exotic coBUG cannot contain any uniform connected component other than a star.
\end{thm}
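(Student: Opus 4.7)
The plan is to proceed by contradiction: suppose $\barG$ is an exotic coBUG containing a connected $(m,t)$-uniform component $H$ that is not a star. By Lemma~\ref{lem-stars}, $\barG$ has at least one star component, and all its star components are isomorphic to a fixed $K_{1,\ell}$. The case $\ell = 0$ is dispatched immediately, since then any vertex of $\barG$ would have co-betweenness $0$, forcing $\barG$ to be edgeless and $H$ to be an isolated vertex (a star) -- contradiction. So $\ell \ge 1$, and since $H \not\cong K_{1,\ell}$, Lemma~\ref{lem-starH} applies and gives the constraints $m \ge \ell+1$, $t \le \ell$, $\ell(\ell+1) \ge mt$, together with the explicit value $n = (m(\ell+1) - \ell t)/(m-\ell)$.

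The core step is to derive a contradiction from the low-betweenness condition $|E(\barG)| < n$ provided by Corollary~\ref{cor-coBUG-density}. Let $k$ denote the number of star components of $\barG$, set $v := |V(H)|$, and let $v'$, $e'$ collect the vertices and edges of all remaining components. By Lemma~\ref{lem-stars}, each such remaining component contains a cycle (otherwise it would be a tree, and hence a star of size $\ell$), so $e' \ge v'$. Writing $n = k(\ell+1) + v + v'$ and $|E(\barG)| = k\ell + vm/t + e'$, and using $e' \ge v'$, the low-betweenness inequality simplifies to $v(m-t) < kt$. Combined with the standard bound $v \ge t$ (since $|\closep{H}(e)| = t \le |V(H)|$ for any edge of $H$), this yields $m - t < k$, that is, $k \ge m - t + 1$.

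For the matching upper bound, $v \ge t$ and $v' \ge 0$ in the identity $n = k(\ell+1) + v + v'$ give $k \le (n-t)/(\ell+1)$. Substituting the explicit value of $n$ reduces the desired inequality $(n-t)/(\ell+1) \le m-t$ to the purely algebraic statement $m(\ell+1-t) \le (m-t)(m-\ell)(\ell+1)$, and in the variables $a := m-\ell \ge 1$ and $b := \ell+1-t \ge 1$ the difference factors cleanly as $(a-1)\bigl[a(\ell+1)+\ell b\bigr] \ge 0$. Hence $k \le m-t$, contradicting $k \ge m-t+1$. The main point of care I anticipate is bookkeeping when $\barG$ has several non-star components; encapsulating everything outside of $H$ and the stars into the single nonnegative quantity $e'-v'$ handles them all uniformly, so their presence only strengthens the contradiction rather than obstructing the argument.
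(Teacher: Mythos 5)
Your proposal is correct and follows essentially the same route as the paper: both decompose $\barG$ into $H$, the star components, and the remaining (non-tree, hence excess~$\ge 0$) components, invoke Lemma~\ref{lem-starH} for the value of $n$, and reduce to the same algebraic inequality $\frac{n-t}{\ell+1}\le m-t$ (your factorization $(a-1)[a(\ell+1)+\ell b]$ is the paper's $\frac{\Delta-1}{\Delta}\left(\Delta+\ell-\frac{t\ell}{\ell+1}\right)$ up to a positive factor). The only cosmetic difference is that you phrase the contradiction as two incompatible bounds on the number of star components, whereas the paper bounds the excess $e-n$ from below directly.
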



\begin{proof}
For a contradiction, let $\barG$ be an exotic coBUG on $n$ vertices that contains an $(m,t)$-uniform 
connected component $H$ which is not a star.
By Lemma~\ref{lem-stars}, $\barG$ contains a component isomorphic to a star $K_{1,\ell}$ for some 
$\ell$, and all the tree components of $\barG$ are isomorphic to the same star $K_{1,\ell}$.

From Lemma~\ref{lem-starH}, we know that
\begin{equation}
n=\ell+1+\ell\cdot \frac{\ell+1 - t}{m-\ell}. \label{eq-n}
\end{equation}
The same lemma also shows that $m>\ell$ and $t<\ell+1$, and in particular $m>t$.

Let $e$ denote the number of edges of $\barG$. Recall that the excess $\ex(\barG)$ of $\barG$ is 
defined as $e-n$. To reach contradiction, we will now show that $\barG$'s excess is non-negative, 
which by Lemma~\ref{lem-density} implies that the BUG $G$ has betweenness at least 1, and therefore 
is not an exotic BUG. 

We will estimate the excess of $\barG$ componentwise. Let 
$\cC$ be the set of connected components of $\barG$, let $c_H\in\cC$ be a connected component 
isomorphic to $H$, $\cC_1\subseteq\cC$ be the set of the components of $\barG$ isomorphic to 
$K_{1,\ell}$, and let $\cC_2$ be the set $\cC\setminus(\{c_H\}\cup\cC_1)$ of all the remaining 
components. For a component $c\in\cC$, let $e(c)$ and $n(c)$ denote the number of edges and the 
number of vertices of $c$, respectively.

Note that $|\cC_1|$ is at most $\frac{n-t}{\ell+1}$, since the component $c_H$ has at least $t$ 
vertices and each component in $\cC_1$ has $\ell+1$ vertices. 

We then have 
\begin{align}
 e-n&=\sum_{c\in\cC} \big(e(c)-n(c)\big)\notag\\
 &=e(c_H)-n(c_H) +\sum_{c\in\cC_1} (-1) + \sum_{c\in\cC_2}\big(e(c)-n(c)\big)\notag\\
 &\ge e(c_H)-n(c_H) - \frac{n-t}{\ell+1}.\label{eq-en1}
\end{align}
Define $\alpha:=\frac{e(c_H)}{m}$, and 
note that $\alpha\ge 1$. Recall from Lemma~\ref{lem-unifdens} that 
$\frac{e(c_H)}{n(c_H)}=\frac{m}{t}$, and hence $\frac{n(c_H)}{t}=\frac{e(c_H)}{m}=\alpha$. Hence 
$e(c_H)-n(c_H)=\alpha(m-t)\ge m-t$. Combining this with~\eqref{eq-n} and \eqref{eq-en1}, and using 
the notation $\Delta:=m-\ell$, we obtain
\begin{align}
e-n&\ge m-t - \frac{n-t}{\ell+1}\notag\\
&= m-t-\frac{1}{\ell+1}\left(\ell+1+\ell\cdot \frac{\ell+1 - t}{m-\ell}-t\right)\notag\\
&= \ell+\Delta-t-\frac{1}{\ell+1}\left(\ell+1+\ell\cdot \frac{\ell+1 - t}{\Delta}-t\right)\notag\\
&= \ell+\Delta -t -1 -\frac{\ell}{\Delta}+\frac{\ell t}{(\ell+1)\Delta}+\frac{t}{\ell+1}\notag\\
&=\Delta-1+\ell\cdot\frac{\Delta-1}{\Delta}-t\left(1-\frac{\ell}{(\ell+1)\Delta}-\frac{1}{\ell+1}
\right)\notag\\
&=\Delta-1+\ell\cdot\frac{\Delta-1}{\Delta}-t\cdot\frac{\ell(\Delta-1)}{(\ell+1)\Delta}\notag\\
&=\frac{\Delta-1}{\Delta}\left(\Delta+\ell-t\cdot\frac{\ell}{\ell+1}\right)\notag\\
&\ge 0,\label{eq-ne2}
\end{align}
where \eqref{eq-ne2} follows from
$\Delta+\ell-t\cdot\frac{\ell}{\ell+1}>\Delta+\ell -t=m-t>0$. 

We conclude that $e-n\ge 0$, showing that $\barG$ is not an exotic coBUG, a contradiction.
\end{proof}

Theorem~\ref{thm-mt} only applies to BUGs with betweenness smaller than~1. This bound is 
sharp in the sense that there is an infinite family of BUGs of betweenness 1 whose complements 
consist of a non-star uniform component combined with a suitable number of star 
components. This is shown by the next example.

\begin{example}\label{ex-mt}
Let $H$ be a complete multipartite graph with $t$ vertices and $m$ edges, and suppose that $m>t$. 
Recall from Observation~\ref{obs-multipar} that $H$ is $(m,t)$-uniform. Define $\ell:=m-1$. Let 
$\barG$ be the graph obtained as a disjoint union of $H$ with $m-t$ disjoint copies of 
$K_{1,\ell}$. We claim that $\barG$ is a coBUG whose complement is a BUG of betweenness~1. To see 
this, note first that $\barG$ has the same number of vertices and edges: indeed, the number of 
vertices is $n:=(m-t)(\ell+1)+t=(m-t)m+t=m^2-mt+t$, while the number of edges is 
$(m-t)(m-1)+m=m^2-mt+t$. It then remains to verify that all the vertices in $\barG$ have the same 
co-betweenness and apply Lemma~\ref{lem-density}. For a vertex $x$ belonging to a copy of 
$K_{1,\ell}$, the co-betweenness is
\begin{align*}
 coB(x)&=\frac{\ell}{n-(\ell+1)}=\frac{m-1}{m^2-mt+t-m}=\frac{1}{m-t},
\end{align*}
while for a vertex $y$ from $H$, Observation~\ref{obs-coB-mt-unif} yields
\begin{align*}
 coB(y)&=\frac{m}{n-t}=\frac{m}{m^2-mt+t-t}=\frac{1}{m-t}.
\end{align*}
Since all the vertices have the same co-betweenness, $\barG$ is a coBUG, and since $\barG$ has the 
same number of vertices and edges, its complement is a BUG of betweenness~1 by 
Lemma~\ref{lem-density}.
\end{example}

We are also able to construct an example of a sequence of coBUGs combining star components and 
non-star components, whose betweenness converges to 1 from above. 

\begin{example}\label{ex-above}
Fix a natural number $t\ge 2$, and define the following parameters:
\begin{align*}
k&=4t^2-4t-1\\
\ell&=4t^2-1\\
b&=t+1\\
c&=2t.
\end{align*}
Let $\barG$ be the disjoint union of $k$ copies of the star $K_{1,\ell}$ and $b$ copies of the 
complete bipartite graph $K_{c,c}$. We will show that $\barG$ is a coBUG with betweenness value 
$1+\frac{1}{4t}$. The number of vertices of $\barG$ is
\[
n:=k(\ell+1)+2bc=4t^2(4t^2-4t-1)+4t(t+1)=16t^4-16t^3+4t.
\]
A vertex $x$ in a component isomorphic to $K_{1,\ell}$ has co-betweenness
\[
coB(x)=\frac{\ell}{n-(\ell+1)}=\frac{4t^2-1}{16t^4-16t^3-4t^2+4t}=\frac{1}{4t^2-4t},
\]
while a vertex $y$ in a component isomorphic to $K_{c,c}$ has co-betweenness
\[
coB(y)=\frac{c^2}{n-2c}=\frac{4t^2}{16t^4-16t^3}=\frac{1}{4t^2-4t},
\]
showing that $\barG$ is indeed a coBUG. To determine the betweenness, we first compute the number 
of edges
\[
e:=k\ell+bc^2=16t^4-12t^3-4t^2+4t+1,
\]
and then use Lemma~\ref{lem-density} to determine that the betweenness is
\[
\frac{e}{n}=\frac{16t^4-12t^3-4t^2+4t+1}{16t^4-16t^3+4t}=\frac{4t+1}{4t}.
\]
\end{example}

By Lemma~\ref{lem-stars} and Theorem~\ref{thm-mt}, any exotic coBUG must consist of some number of 
star components isomorphic to $K_{1,\ell}$ together with some number of non-uniform components. 
Getting a valid coBUG by combining non-uniform components with star components is quite tricky, 
even when we do not require low betweenness. However, with the help of a computer-assisted search, 
we can show that, at least without the low betweenness restriction, such coBUGs exist. 

Our construction is based on the concept of inflation. For a graph $H$ and a number $k$, the 
\emph{$k$-fold inflation of $H$} is the graph $H[k]$ obtained by replacing each vertex $x$ of $H$ by 
a $k$-tuple of new vertices $x_1,\dotsc,x_k$ forming a clique, and replacing every edge $xy$ of $H$ 
by the $k^2$ edges $x_iy_j$ for all $1\le i,j\le k$. In particular, if $H$ has $n$ vertices and $e$ 
edges, then $H[k]$ has $kn$ vertices and $n\binom{k}{2}+ek^2$ edges. 

We can now present the promised example of a coBUG combining tree components with non-uniform 
components.

\begin{example} Let $H$ be the 27-fold inflation of the cycle of length 721. Note that $H$ is not 
a uniform graph. Although every vertex of $H$ has the same number of edges close to it (namely 
$4\cdot 27^2+3\cdot\binom{27}{2}=3969$), there are two types of edges: the first type are the edges 
connecting a pair of vertices $x_ix_j$ obtained by inflating a single vertex $x$ of the original 
cycle (which have $3\cdot 27=81$ vertices close to them), and the second type are the edges 
connecting a pair of the form $x_iy_j$ with $xy$ an edge of the original cycle (which have 
$4\cdot 27=108$ vertices close to them).

Let $\barG$ be the graph obtained as a disjoint union of $H$ and 81 copies of the star $K_{1,3924}$. 
In particular, $\barG$ has $n:=27\cdot 721+ 81\cdot 3925=337392$ vertices and $e:=721\cdot 
\binom{27}{2}+721\cdot27^2+81\cdot 3924=1096524$ edges. 

To verify that $\barG$ is a coBUG, we compute the co-betweenness of every vertex. For a vertex $w$ 
of a star component, we clearly have $coB(w)=\frac{3924}{n-3925}=\frac{3924}{333467}$. For a vertex 
$v$ of $H$, we first observe that an edge of the first type has weight 
$\frac{1}{n-81}=\frac{1}{337311}$, while an edge of the second type has weight 
$\frac{1}{n-108}=\frac{1}{337284}$. This gives
\[
coB(v)=\frac{3\cdot\binom{27}{2}}{337311}+\frac{4\cdot27^2}{337284}=\frac{3924}{333467},
\]
showing that all the vertices have the same co-betweenness, and $\barG$ is indeed a coBUG. It is not 
exotic, however, since the betweenness of the 
corresponding BUG $G$ is $B(G)=\frac{e}{n}=\frac{13}{4}\ge1$ by Lemma~\ref{lem-density}. 

Note that this construction also works when we replace the original cycle of length 721 by any 
graph on 721 vertices formed by a disjoint union of cycles of length at least~4.
\end{example}

\subsection{No exotic BUGs with betweenness up to \texorpdfstring{$\frac{9}{10}$}{9/10}}\label{ssec-small}

In this section, we show that there are no exotic BUGs whose betweenness value is at most 
$\frac{9}{10}$. To reach this result, we will show that any coBUG containing a connected 
component isomorphic to $K_{1,\ell}$ for some $\ell\le 8$ must only consist of a disjoint union of 
some number of copies of~$K_{1,\ell}$. Since any potential exotic coBUG must contain a star 
component, as well as at least one non-star component, this means that the star components in 
exotic coBUGs are of the form $K_{1,m}$ for some $m\ge 9$, implying that the coBUG has 
edge-to-vertex ratio greater than $\frac{m}{m+1}\ge\frac{9}{10}$. 

We begin by collecting general facts about the possible structure of a non-star component in an 
exotic coBUG.

\begin{lemm}\label{lem-cobug-comp-structure}
Let $\barG$ be an exotic coBUG on $n$ vertices, and let $H$ be a non-star component of~$\barG$. 
Then $H$ has the following properties.
\begin{enumerate}
    \item $H$ has no vertex of degree 1.
    \item $H$ has no vertex of degree 2 whose neighbours are adjacent; in other words, there 
is no triangle in $H$ containing a vertex of degree~2.
    \item Suppose that $x$ and $y$ are two adjacent vertices of $H$ which both have degree~2. 
Then $H$ does not contain any cycle of length 4 or 5 containing both $x$ and~$y$.
    \item $H$ has no triple of vertices $x,y,z$, all of degree~2, such that $xy$ and $yz$ are 
both edges of~$H$.
\end{enumerate}
\end{lemm}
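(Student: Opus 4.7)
The plan is to prove each of the four statements by deriving, from a violating configuration, either a strict containment between the closures of two vertices of $H$ (contradicting Lemma~\ref{lem-close-inclusion}) or a uniform non-star structure on $H$ (contradicting Theorem~\ref{thm-mt}). Part~1 is immediate from Lemma~\ref{lem-stars}: any component containing a vertex of degree at most $1$ is forced to be a star, and $H$ is by assumption not a star.

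For Part~2, I would assume $x$ has degree $2$ with $N(x)=\{a,b\}$ and $ab\in E(H)$. The inclusion $\close(x)\subseteq \close(a)$ always holds since $b\in N(a)$ forces $E(b)\subseteq \close(a)$ while trivially $E(a)\subseteq \close(a)$ (here $E(v)$ denotes the edges incident to $v$). If $\deg(a)=\deg(b)=2$, then $H=K_3$ is $(3,3)$-uniform, contradicting Theorem~\ref{thm-mt}. Otherwise, WLOG $\deg(a)\ge 3$; pick $c\in N(a)\setminus\{x,b\}$, which has $\deg(c)\ge 2$ by Part~1, so $c$ has a neighbour $z\ne a$. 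If $z\notin\{a,b\}$ then $cz\in \close(a)\setminus\close(x)$ gives strict containment. Otherwise $N(c)=\{a,b\}$ for every such $c$, and $H$ is the complete tripartite graph $K_{1,1,k}$ (a book), which is uniform.

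For Part~3, write $N(x)=\{y,a\}$ and $N(y)=\{x,b\}$ and handle the two subcases separately. The 4-cycle $x,y,b,a$ forces $ab\in E$; then either $\deg(a)=\deg(b)=2$ and $H=C_4$ is uniform, or WLOG $\deg(b)\ge 3$ supplies $c\in N(b)\setminus\{y,a\}$, so that $bc\in \close(b)\setminus\close(x)$ combined with $\close(x)=E(y)\cup E(a)\subseteq \close(b)$ gives strict containment. The 5-cycle $x,y,b,c,a$ with $ab\notin E$ (the 4-cycle case being already settled) is the most delicate: here I use the coBUG identity $coB_{\barG}(x)=coB_{\barG}(c)$. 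Since $x,y\notin N(c)$, one checks that $\close(x)\setminus\close(c)=\{xy\}$ while $bc\in \close(c)\setminus\close(x)$, so the identity forces $w_{\barG}(bc)\le w_{\barG}(xy)=1/(n-4)$. This translates into $|N(b)\cup N(c)|\le 4$, which combined with the trivial lower bound $|N(b)\cup N(c)|\ge |\{y,a,b,c\}|=4$ pins $\deg(b)=\deg(c)=2$. The symmetric comparison $coB_{\barG}(y)=coB_{\barG}(c)$ pins $\deg(a)=2$ as well, so $H=C_5$, uniform.

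For Part~4, let $a,b$ be the other neighbours of $x$ and $z$, respectively. If $a=b$ then $H$ contains a 4-cycle through the adjacent degree-$2$ pair $x,y$; if $a\ne b$ with $ab\in E$ then $H$ contains a 5-cycle through $x,y$; both are ruled out by Part~3. In the remaining case $a\ne b$, $ab\notin E$, I iterate the weight bookkeeping of Part~3. From $coB_{\barG}(x)=coB_{\barG}(y)$ I obtain $\sum_{e\in E(a)\setminus\{xa\}}w_{\barG}(e)=w_{\barG}(zb)$, and comparing $coB_{\barG}(x)$ with $coB_{\barG}(a)$ extracts analogous constraints on the other neighbour of $a$. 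Because $w_{\barG}(e)=1/(n-|\close(e)|)$ is strictly monotone in $|\close(e)|$, any vertex of degree at least $3$ along the chain inflates some $|\close(e)|$ past $4$ and breaks the required equality. Propagating, every vertex on the chains through $a$ and through $b$ has degree $2$, so these chains must close into a cycle since $H$ is finite and connected, forcing $H=C_k$ for some $k\ge 4$, which is $(4,4)$-uniform and not a star. The main technical obstacle throughout is controlling the weight identities in the 5-cycle step of Part~3 and its iterative use in Part~4, where one must rule out the possibility that a collection of higher-degree vertices conspires to satisfy the co-betweenness equations by balancing multiple weight terms.
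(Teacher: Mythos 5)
Your Parts 1--3 are correct and follow essentially the same route as the paper: Part 1 via Lemma~\ref{lem-stars}, Part 2 by forcing $\close(x)=\close(a)=\close(b)$ and concluding $H\isom K_{1,1,k}$, and Part 3 by a closure-inclusion argument for the 4-cycle and the weight comparison $coB(x)=coB(c)$ for the 5-cycle, which is exactly the paper's computation (your bound $w(bc)\le w(xy)=\frac{1}{n-4}$ is legitimate there because the left difference set $\close(x)\setminus\close(c)$ is the single edge $xy$ of weight exactly $\frac{1}{n-4}$).

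Part 4, however, has a genuine gap. Your identity $\sum_{e\in E(a)\setminus\{xa\}}w(e)=w(zb)$ is correct, but it does not force $\deg(a)=2$: since $w(e)=\frac{1}{n-|\close(e)|}$ is \emph{increasing} in $|\close(e)|$, a high-degree vertex makes weights \emph{larger}, not smaller, and here $w(zb)=\frac{1}{n-\deg(b)-2}$ sits on the right-hand side. So if $\deg(b)$ is large, the single term $w(zb)$ can comfortably exceed $\frac{2}{n-4}$ and accommodate $\deg(a)\ge 3$ on the left; your appeal to monotonicity ("inflates some $|\close(e)|$ past $4$ and breaks the required equality") points in the unhelpful direction, and you yourself flag the unresolved possibility that high-degree vertices "conspire to balance" the equations. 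The working comparison — which is what the paper does — is $coB(x)$ versus $coB(a)$ (equivalently, $coB(z)$ versus a degree-$\ge 3$ vertex $w$ adjacent to $z$, after sliding the triple along the degree-2 path until such a $w$ appears): there $\close(x)\setminus\close(a)=\{yz\}$ has weight exactly $\frac{1}{n-4}$, while if $\deg(a)\ge 3$ one exhibits \emph{two distinct} edges in $\close(a)\setminus\close(x)$, each of weight at least $\frac{1}{n-4}$ — namely, for two neighbours $u,v$ of $a$ other than $x$, an edge at $u$ avoiding $\{a,v\}$ and an edge at $v$ avoiding $\{a,u\}$, whose existence needs Part 2 (otherwise $u,v,a$ would form a triangle with a degree-2 vertex). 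This gives $coB(a)>coB(x)$ directly, with no propagation and no conspiracy to rule out. You mention the comparison of $coB(x)$ with $coB(a)$ only in passing, as constraining "the other neighbour of $a$", so as written the key step of your chain argument is not established.
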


\begin{proof}
The first claim follows from Lemma~\ref{lem-stars}. 

For the rest of the proof, recall from Theorem~\ref{thm-mt} that $H$ is not a uniform graph, and in 
particular $H$ is neither a cycle nor a complete multipartite graph.

To prove the second claim, let $x \in V(\barG)$ be a vertex of degree two adjacent to vertices $y$ 
and $z$ such that $yz \in E(\barG)$. Note that $\close(x)$ is a subset of $\close(y)$ as 
well as of $\close(z)$. In view of Lemma~\ref{lem-close-inclusion}, this means that 
$\close(x)=\close(y)=\close(z)$. Since $H$ is not a cycle, at least one of the two vertices $y,z$ 
must have degree greater than~2. Suppose that $\deg(y)>2$ and let $w$ be any vertex adjacent to $y$.

Since any edge incident to $w$ is close to $y$, it must be also close to $x$, and therefore any 
edge incident to $w$ must connect it to $y$ or to~$z$. Since $H$ has no vertices of degree 1, any 
vertex of $H$ different from $y$ or $z$ must have degree two and be adjacent to both $y$ 
and~$z$. It follows that $H$ is isomorphic to the complete multipartite graph $K_{1,1,\ell}$ for 
some $\ell\ge 1$, which is impossible since $H$ is not uniform. This proves the second claim.

To prove the third claim, let $x$ and $y$ be two adjacent vertices of degree 2 in $H$, let $u$ be 
the neighbor of $x$ different from $y$ and let $v$ be the neighbor of $y$ different from~$x$. 
From the second claim, it follows that $u\neq v$. To prove the third claim, we need to show that 
$u$ and $v$ are not adjacent and that they do not share any common neighbor.

Suppose first that $uv$ is an edge of~$H$. Then $\close(x)\subseteq \close(u)$ and 
$\close(y)\subseteq\close(v)$. By Lemma~\ref{lem-close-inclusion}, these inclusions must in fact be 
equalities. However, any edge incident to $u$ other than $ux$ and $uv$ would belong to 
$\close(v)\setminus\close(y)$, hence $\deg(u)=2$, and by the same argument, $\deg(v)=2$, showing 
that $H$ is isomorphic to $C_4$, which is impossible.

Now suppose that $u$ and $v$ have a common neighbor~$w$. Observe that each of the five edges $xy$, 
$yv$, $vw$, $wu$, and $ux$ has weight at least $\frac{1}{n-4}$, with equality if and only if both 
vertices of the edge have degree~2. Since $coB(x)=coB(w)$, it follows that the total weight of 
edges in $\close(x)\setminus\close(w)$ is the same as the total weight of edges in 
$\close(w)\setminus\close(x)$. The set $\close(x)\setminus\close(w)$ contains only the edge $xy$, 
which has weight $\frac{1}{n-4}$, while $\close(w)\setminus\close(x)$ contain the edge $wv$, as 
well as all the other edges incident to $w$ or $v$, other than $yv$ and $wu$. It follows that $wv$ 
has weight $\frac{1}{n-4}$, and $\close(w)\setminus\close(x)$ does not contain any other edge. 
Symmetrically, $\close(w)\setminus\close(y)$ only contains the edge $wu$ of weight $\frac{1}{n-4}$. 
It follows that the vertices $u$, $v$ and $w$ all have degree 2, and $H\isom C_5$, which is 
impossible. This proves the third claim.

To prove the fourth claim, note that since $H$ is not a cycle, it must contain a vertex $w$ of 
degree at least~3. We may assume without loss of generality that this vertex is adjacent to~$z$. 
From the third claim of the lemma, we know that $w$ is not adjacent to $x$ and has no common 
neighbors with $x$. The only edge in $\close(z)\setminus\close(w)$ is the edge $xy$, which has 
the smallest possible weight $\frac{1}{n-4}$. We will now show that there are at least two edges 
close to $w$ which are not close to~$z$. Let $u$ and $v$ be any two neighbors of $w$ different 
from~$z$. Note that $u$ is incident to at least one edge $e_u$ which does not connect it to $w$ or 
to $v$: indeed, if $u$ were only incident to the two edges $uw$ and $uv$, we would get a 
contradiction with the second claim. Similarly, $v$ is incident to an edge $e_v$ which does not 
connect it to $u$ or~$w$. The two edges $e_u$ and $e_v$ are close to $w$ but not to~$z$, and their 
combined weight is at least $\frac{2}{n-4}$. This shows that $coB(w)>coB(z)$, a contradiction.
\end{proof}

We will now deduce several simple but useful consequences of Lemma~\ref{lem-cobug-comp-structure}.

\begin{lemm}\label{lem-minimal-edge-weight}
    Let $H$ be a non-star component of an exotic coBUG $\barG$ on $n$ vertices. Then any edge 
of $H$ has at least four vertices close to it, and therefore its weight is at least $\frac{1}{n-4}$.
\end{lemm}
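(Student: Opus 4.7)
The plan is to prove the bound $|\closep{\barG}(e)|\ge 4$ directly from the structural restrictions established in Lemma~\ref{lem-cobug-comp-structure}. Let $e=xy$ be an edge of $H$. Recall that $\closep{\barG}(e)=N_{\barG}(x)\cup N_{\barG}(y)$, and since $y\in N(x)$ and $x\in N(y)$, we always have $\{x,y\}\subseteq \closep{\barG}(e)$. So the task reduces to finding two additional vertices in $\bigl(N(x)\cup N(y)\bigr)\setminus\{x,y\}$.

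Next I would use item~(1) of Lemma~\ref{lem-cobug-comp-structure}, which says $H$ has no vertex of degree~1, so both $x$ and $y$ have a neighbor distinct from the other endpoint; call these $u\in N(x)\setminus\{y\}$ and $v\in N(y)\setminus\{x\}$. If $u\ne v$, then $\{x,y,u,v\}\subseteq\closep{\barG}(e)$ and we are done.

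The only remaining case is $u=v$, i.e., $x$ and $y$ share a common neighbor $u$, so $xyu$ is a triangle in $H$. Here is where item~(2) of Lemma~\ref{lem-cobug-comp-structure} kicks in: no degree-2 vertex of $H$ lies in a triangle. Hence both $x$ and $y$ have degree at least~$3$, and in particular $x$ has a further neighbor $u'\notin\{y,u\}$. Then $\{x,y,u,u'\}\subseteq\closep{\barG}(e)$, again giving $|\closep{\barG}(e)|\ge 4$. The weight bound $w_{\barG}(e)=1/(n-|\closep{\barG}(e)|)\ge 1/(n-4)$ then follows immediately from the definition of $w_{\barG}$.

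There is no real obstacle here; the argument is essentially a two-line case split, and the work has already been done in Lemma~\ref{lem-cobug-comp-structure}. Items~(3) and~(4) of that lemma are not needed for this bound — they will presumably be used later to rule out more subtle configurations. The only thing to watch for is that the weight is well-defined, i.e., that $|\closep{\barG}(e)|<n$; but this holds automatically since $\barG$ is disconnected (it has star components besides $H$), so there are vertices outside $H$ that are not close to any edge of $H$.
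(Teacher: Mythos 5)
Your proof is correct and follows essentially the same route as the paper's: both derive $|\closep{H}(e)|\ge 4$ from items (1) and (2) of Lemma~\ref{lem-cobug-comp-structure}, the only configuration that could fail being two adjacent degree-2 endpoints sharing their remaining neighbour, which item (2) forbids. Your closing remark on well-definedness of the weight is a harmless bonus not present in the paper.
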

\begin{proof}
Let $e=xy$ be an edge of~$H$. By Lemma~\ref{lem-cobug-comp-structure}, both $x$ and $y$ have degree 
at least 2, and if both have degree exactly two, they do not share a common neighbor. It follows 
that $e$ is close to at least four vertices and has weight 
$w(e)=\frac{1}{n-|\close(e)|}\ge\frac{1}{n-4}$.
\end{proof}

\begin{lemm}\label{lem-non-uniform-size}
Any non-star component of an exotic coBUG has at least six vertices.
\end{lemm}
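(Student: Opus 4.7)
The plan is to show that any non-star component $H$ of an exotic coBUG has $|V(H)|\ge 6$, by a case analysis on $|V(H)|\in\{3,4,5\}$. The general setup is as follows: by Lemma~\ref{lem-cobug-comp-structure}(1), $H$ has minimum degree at least~$2$, so $|E(H)|\ge |V(H)|$; and by Theorem~\ref{thm-mt}, $H$ is not uniform, in particular neither a cycle nor a complete multipartite graph. For each small value of $|V(H)|$ I would enumerate the connected graphs with minimum degree at least~$2$ and confirm that each is ruled out by one of these two obstructions, by one of the further items of Lemma~\ref{lem-cobug-comp-structure}, or by Lemma~\ref{lem-close-inclusion}.

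The cases $|V(H)|=3,4$ are essentially immediate: on $3$ vertices the only candidate is $K_3$, a cycle, and on $4$ vertices the candidates are $C_4$, $K_4-e$, and $K_4$, where $C_4$ and $K_4$ are uniform while $K_4-e$ has a triangle through each of its two degree-$2$ vertices, contradicting item 2 of Lemma~\ref{lem-cobug-comp-structure}. For $|V(H)|=5$ I would stratify by the number $d$ of degree-$2$ vertices. The case $d=5$ gives $C_5$, and when $d=0$ the only realisable degree sequences yield $K_5$, $W_4\isom K_{2,2,1}$, and $K_{1,1,1,2}$, all complete multipartite. For $d\in\{2,3,4\}$, degree-sum considerations force at least one universal vertex, and inspection of the few resulting graphs shows that each is either complete multipartite (e.g., $K_{2,3}$ when $d=3$) or contains a triangle through a degree-$2$ vertex, again contradicting item 2 of Lemma~\ref{lem-cobug-comp-structure}.

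The main obstacle---the only case not handled by the cycle / multipartite / triangle combinations---is $d=1$ with degree sequence $(3,3,3,3,2)$. A direct check shows that the constraints of Lemma~\ref{lem-cobug-comp-structure} pin down a unique graph, the \emph{capped diamond}: take the diamond $K_4-e$ with non-adjacent vertices $a,b$ and common neighbours $c,d$, and attach a fifth vertex $x$ joined to $a$ and~$b$. This graph has $7$ edges, minimum degree $2$, and no triangle through its only degree-$2$ vertex $x$, so it passes all of Lemma~\ref{lem-cobug-comp-structure}. However, one computes $\close(x)=E(H)\setminus\{cd\}$ while $\close(y)=E(H)$ for every $y\neq x$, so $\close(x)\subsetneq\close(y)$; this contradicts Lemma~\ref{lem-close-inclusion} and completes the analysis.
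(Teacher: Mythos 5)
Your proof is correct, but it is organised quite differently from the paper's. You run an exhaustive enumeration of connected graphs with minimum degree at least $2$ on $3$, $4$ and $5$ vertices, stratified by degree sequence, whereas the paper starts from the fact that $H$, not being complete multipartite, contains an induced $K_2+K_1$ (Observation~\ref{obs-multipar}): the isolated vertex $x$ of that induced subgraph needs two further neighbours $u,v$, which already forces $|V(H)|=5$ and $\deg(x)=2$, and then a two-line case split on $\deg(u),\deg(v)$ finishes the job. Both arguments funnel into the same extremal graph --- your ``capped diamond'' is exactly the graph the paper reaches when $u$ and $v$ both have degree $3$ (the complement of $P_3\cup K_2$; the paper's label ``$K_5^-$'' for it is in fact a misnomer, as it has $7$ edges, not $9$) --- and both kill it by the same computation $\closep{H}(x)=E(H)\setminus\{cd\}\subsetneq E(H)=\closep{H}(y)$ via Lemma~\ref{lem-close-inclusion}. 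Your route is more laborious but entirely self-contained and makes visible that every other $5$-vertex candidate dies for a ``cheap'' reason (cycle, complete multipartite, or triangle through a degree-$2$ vertex); the paper's route buys brevity by exploiting Observation~\ref{obs-multipar} up front.

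One small inaccuracy to fix: your claim that for $d\in\{2,3,4\}$ ``degree-sum considerations force at least one universal vertex'' is false for $d=3$ with degree sequence $(2,2,2,3,3)$ --- neither $K_{2,3}$ nor the house graph ($C_5$ plus a chord) has a universal vertex. This does not break the proof, since both graphs are still dispatched by the obstructions you list ($K_{2,3}$ is complete multipartite, and the house has a degree-$2$ vertex in a triangle), but the enumeration for that subcase should not be justified by the universal-vertex claim. You should also record explicitly that the other realisable $d=1$ sequence, $(2,3,3,4,4)$, is handled by item 2 of Lemma~\ref{lem-cobug-comp-structure}, so that $(2,3,3,3,3)$ really is the only surviving case.
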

\begin{proof}
Let $H$ be a non-star component of an exotic coBUG on $n$ vertices, and suppose that $H$ has at 
most five vertices. Since $H$ is not uniform, and therefore in particular not complete multipartite, 
in contains an induced copy of $K_1+K_2$ by Observation~\ref{obs-multipar}. In other words, it 
contains three vertices $x,y,z$ where $yz$ is an edge of $H$ but $xy$ and $xz$ are not. Since $x$ 
has degree at least two by Lemma~\ref{lem-cobug-comp-structure}, it is adjacent to two other 
vertices $u$ and~$v$. It follows that $x$ has degree exactly two and $V(H)=\{x,y,z,u,v\}$. Each of 
the two vertices $u$ and $v$ must have at least one neighbor other than $x$. By the second claim of 
Lemma~\ref{lem-cobug-comp-structure}, $uv$ is not an edge of~$H$. 

If one of the two vertices $u$ and $v$ has degree 2, we will obtain a cycle of length 4 or 5 with 
two adjacent vertices of degree two, which is impossible. If both $u$ and $v$ have degree 3, then 
$H$ is isomorphic to the graph $K_5^-$, obtained from $K_5$ by the removal of one edge. 
Then $\close(x)=E(H)\setminus\{yz\}$, while $\close(y)=E(H)$, contradicting 
Lemma~\ref{lem-close-inclusion}.
\end{proof}

Many of our arguments are based on the notion of excess $\ex(H)$ of a graph~$H$. We now collect some 
of the salient facts related to this notion.

\begin{obs}\label{obs-excdeg}
For any graph $H=(V,E)$, its excess can be expressed as
\[
\ex(H)=|E|-|V|=\left(\frac12\sum_{x\in V}\deg(x)\right)-|V|=\frac{1}{2}\sum_{x\in 
V}\left(\deg(x)-2\right).
\]
In particular, if $H$ has minimum degree at least 3, then $\ex(H)\ge\frac{|V|}{2}$.
\end{obs}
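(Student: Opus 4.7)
The plan is very short, since the statement amounts to the handshake lemma applied to the definition of excess. I would first invoke the standard identity $\sum_{x \in V}\deg(x) = 2|E|$, which counts ordered incidences between vertices and edges. Dividing by two and substituting into $\ex(H) = |E| - |V|$ gives the first claimed equality $\ex(H) = \tfrac{1}{2}\sum_{x\in V}\deg(x) - |V|$.

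For the second equality, I would write $|V| = \tfrac{1}{2}\sum_{x \in V} 2$ and combine the two sums, obtaining $\ex(H) = \tfrac{1}{2}\sum_{x \in V}(\deg(x) - 2)$. This is purely algebraic and no further inequality is required.

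For the final ``in particular'' claim, the argument is a one-line lower bound: if $\deg(x) \ge 3$ for every $x \in V$, then each summand $\deg(x) - 2$ is at least $1$, so $\ex(H) \ge \tfrac{1}{2}|V|$. There is no genuine obstacle here; the observation is meant as a bookkeeping device for later proofs that need to lower-bound the edge count of a component with no vertices of degree at most~$2$ (as supplied by Lemma~\ref{lem-cobug-comp-structure}).
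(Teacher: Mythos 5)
Your proof is correct and is exactly the intended argument: the paper states this observation without proof, as it follows immediately from the handshake lemma $\sum_{x\in V}\deg(x)=2|E|$ in the way you describe. The final bound from $\deg(x)-2\ge 1$ is also exactly right; there is nothing to add.
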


\begin{obs}\label{obs-excn}
Let $\barG$ be an exotic coBUG on $n$ vertices with a non-star component $H$ on $t$ vertices and a 
star component $K_{1,\ell}$. Then $\barG$ must have negative excess, and in particular, it must have 
more than $\ex(H)$ components isomorphic to $K_{1,\ell}$. Since $t\ge 6$ by 
Lemma~\ref{lem-non-uniform-size}, we get
\[n\ge (\ex(H)+1)(\ell+1)+t\ge (\ex(H)+1)(\ell+1)+6.\]
\end{obs}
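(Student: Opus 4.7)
The plan is a componentwise excess count, leveraging the structural restrictions already established. First I would invoke Corollary~\ref{cor-coBUG-density}: since $\barG$ is by definition the complement of an exotic (hence low-betweenness) BUG, we have $|E(\barG)| < n$, equivalently $\ex(\barG) < 0$. This immediately gives the first assertion about negative excess.

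Next I would decompose $\ex(\barG) = \sum_{c\in\cC} \ex(c)$ across the connected components of $\barG$. Each component isomorphic to $K_{1,\ell}$ contributes excess $-1$. The key observation is that Lemma~\ref{lem-stars} forces every tree component of $\barG$ to be isomorphic to $K_{1,\ell}$, so every non-star component is necessarily non-tree and hence has excess at least $0$. In particular $\ex(H) \ge 0$, and the same bound applies to any further non-star components. Letting $k$ denote the number of $K_{1,\ell}$ components in $\barG$, these facts combine to yield
\[
0 > \ex(\barG) \ge -k + \ex(H),
\]
so $k \ge \ex(H) + 1$, which is the ``more than $\ex(H)$ components'' part of the statement.

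Finally I would count vertices: the $k$ star components contribute $k(\ell+1)$ vertices, and $H$ contributes a further $t$ vertices, so
\[
n \ge k(\ell+1) + t \ge (\ex(H)+1)(\ell+1) + t \ge (\ex(H)+1)(\ell+1) + 6,
\]
where the last step uses $t \ge 6$ from Lemma~\ref{lem-non-uniform-size}. There is no real obstacle here; the entire argument is bookkeeping. The one point worth emphasizing is the role of Lemma~\ref{lem-stars}: without it, a second non-star tree component could in principle contribute additional negative excess and weaken the bound on $k$, but that possibility is ruled out, making stars the sole source of negative excess in $\barG$.
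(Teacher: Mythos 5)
Your proof is correct and follows essentially the same route the paper takes: negative total excess from Corollary~\ref{cor-coBUG-density}, a componentwise excess decomposition in which Lemma~\ref{lem-stars} guarantees that every non-star component is non-tree (hence has nonnegative excess) so that the $K_{1,\ell}$ components are the only source of negative excess, and then a straightforward vertex count.
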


\begin{lemm}\label{lem-exc2}
Any non-star component $H$ of an exotic coBUG has excess at least~2. 
\end{lemm}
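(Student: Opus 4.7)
The plan is to assume for contradiction that $\ex(H)\le 1$ and derive a contradiction via case analysis on the degree sequence of $H$. By Lemma~\ref{lem-cobug-comp-structure}(1), $H$ has minimum degree at least $2$, so Observation~\ref{obs-excdeg} yields $\ex(H)\ge 0$ with equality only when $H$ is a cycle; but cycles are uniform, which Theorem~\ref{thm-mt} forbids, so $\ex(H)\ge 1$. The remaining task is to rule out $\ex(H)=1$, where $\sum_v(\deg(v)-2)=2$ forces the degree sequence to be either (a) one vertex of degree $4$ and the rest of degree $2$, or (b) two vertices of degree $3$ and the rest of degree $2$.

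In case (a), let $v$ be the degree-$4$ vertex with neighbors $a,b,c,d$. Then $H-v$ has $a,b,c,d$ of degree $1$ and every other vertex of degree $2$; since $H$ is connected and $\{a,b,c,d\}$ are the only attachment points, $H-v$ must be the disjoint union of exactly two paths with endpoints in $\{a,b,c,d\}$. I would then inspect each possible length $k$ of such a path: $k=1$ creates a triangle in $H$ with two degree-$2$ vertices, violating Lemma~\ref{lem-cobug-comp-structure}(2), while $k\ge 2$ produces three consecutive degree-$2$ vertices, violating Lemma~\ref{lem-cobug-comp-structure}(4). Hence case (a) is impossible.

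For case (b), let $u,v$ be the two degree-$3$ vertices. Contracting each maximal degree-$2$ path of $H$ to a single edge yields a multigraph on $\{u,v\}$ in which both vertices have degree $3$; up to isomorphism this is either three parallel $uv$-edges or a loop at each vertex together with a single $uv$-edge, giving respectively a \emph{theta graph} $\Theta(p_1,p_2,p_3)$ with three internally disjoint $uv$-paths of lengths $p_1\le p_2\le p_3$, or a \emph{dumbbell} of two cycles joined by a path. The dumbbell is ruled out directly: a cycle of length $3$ contains a degree-$2$ vertex in a triangle (violating Lemma~\ref{lem-cobug-comp-structure}(2)), and a cycle of length $\ge 4$ contains three consecutive degree-$2$ vertices (violating Lemma~\ref{lem-cobug-comp-structure}(4)). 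For the theta graph, Lemma~\ref{lem-cobug-comp-structure}(4) forces $p_i\le 3$ and Lemma~\ref{lem-non-uniform-size} forces $|V(H)|=\sum p_i-1\ge 6$, hence $\sum p_i\ge 7$. Whenever some $p_i=3$ and some $p_j\in\{1,2\}$, the adjacent degree-$2$ pair on $P_i$ together with $P_j$ forms a cycle of length $p_j+3\in\{4,5\}$, violating Lemma~\ref{lem-cobug-comp-structure}(3). The only remaining case is $\Theta(3,3,3)$.

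To eliminate $\Theta(3,3,3)$, let its internal vertices be $a_1,a_2,b_1,b_2,c_1,c_2$ with $a_1,b_1,c_1$ adjacent to $u$. A direct count in $\barG$ on $n$ vertices shows each of the six edges incident to $u$ or $v$ is close to $5$ vertices and each of the three middle edges $a_1a_2,b_1b_2,c_1c_2$ is close to $4$ vertices, so
\[
coB(u)=\tfrac{3}{n-5}+\tfrac{3}{n-4}\quad\text{and}\quad coB(a_1)=\tfrac{4}{n-5}+\tfrac{1}{n-4}.
\]
Setting $coB(u)=coB(a_1)$ simplifies to $\tfrac{2}{n-4}=\tfrac{1}{n-5}$, which forces $n=6$, contradicting $n\ge |V(H)|=8$. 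The hardest part of the argument is the elimination of $\Theta(3,3,3)$: it satisfies every local constraint of Lemma~\ref{lem-cobug-comp-structure}, so the contradiction must come from the global co-betweenness equation rather than any forbidden local substructure.
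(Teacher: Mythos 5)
Your proof is correct, and it follows the same overall strategy as the paper's: reduce to excess $0$ or $1$ via Lemma~\ref{lem-cobug-comp-structure}(1) and Observation~\ref{obs-excdeg}, dispose of excess $0$ because cycles are uniform, and then classify the connected excess-$1$ graphs with minimum degree $2$ (one degree-$4$ vertex, or two degree-$3$ vertices, i.e.\ figure-eights, dumbbells and theta graphs) and exclude them using the forbidden local configurations of Lemma~\ref{lem-cobug-comp-structure}. The substantive difference is at the end. The paper's proof simply asserts that every such graph contains a degree-$2$ vertex in a triangle, two adjacent degree-$2$ vertices on a cycle of length $4$ or $5$, or three consecutive degree-$2$ vertices; that assertion fails for $\Theta(2,2,2)\isom K_{2,3}$ (which is at least uniform, hence already excluded by Theorem~\ref{thm-mt}) and, more seriously, for $\Theta(3,3,3)$, which is non-uniform, has girth $6$, and satisfies all four conditions of Lemma~\ref{lem-cobug-comp-structure}. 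You correctly identify $\Theta(3,3,3)$ as the case that no local constraint rules out, and you eliminate it by the global co-betweenness balance: the six edges meeting a branch vertex are close to $5$ vertices and the three middle edges to $4$, so $coB(u)=\frac{3}{n-5}+\frac{3}{n-4}$ while $coB(a_1)=\frac{4}{n-5}+\frac{1}{n-4}$, and equality forces $n=6<8\le n$. I have checked these counts and they are right, and your use of Lemma~\ref{lem-non-uniform-size} to get $\sum_i p_i\ge 7$ introduces no circularity. So your argument is not merely a valid alternative: it supplies a case analysis and one explicit computation that the paper's own one-sentence justification actually needs.
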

\begin{proof}
Since $H$ is not a star, it has non-negative excess. Since it is not uniform, and therefore not a 
cycle, it has excess at least~1. By Observation~\ref{obs-excdeg}, a connected graph with excess 
equal to 1 and minimum degree 2 either contains two vertices of degree 3, or one vertex of degree 4, 
while all the remaining vertices have degree 2. However, any such graph is in contradiction with 
Lemma~\ref{lem-cobug-comp-structure}, since it contains a vertex of degree 2 in a triangle, two 
adjacent vertices of degree 2 on a cycle of length 4 or 5, or three vertices of degree 2 forming a 
path.
\end{proof}
\begin{cor}\label{cor-excn}
Any exotic coBUG has at least three star components. In particular, with the notation of 
Observation~\ref{obs-excn}, we have $n\ge 3(\ell+1)+t\ge 3\ell+9$.
\end{cor}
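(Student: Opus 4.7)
The plan is to chain together the three results immediately preceding the corollary: Lemma~\ref{lem-non-uniform-size}, Lemma~\ref{lem-exc2}, and Observation~\ref{obs-excn}. The argument is essentially bookkeeping once those three facts are in hand.

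First, I would note that, by definition, an exotic coBUG is one whose complement is a BUG of betweenness less than $1$ that is not a disjoint union of isomorphic stars; combining this with Lemma~\ref{lem-stars}, such a $\barG$ must contain at least one star component $K_{1,\ell}$ and at least one component $H$ that is not a star. Lemma~\ref{lem-exc2} then provides the key quantitative input: this non-star component has $\ex(H)\ge 2$.

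Next, I would apply Observation~\ref{obs-excn}, which states that the number of star components of $\barG$ is strictly greater than $\ex(H)$. Together with the previous step this forces at least $3$ star components, proving the first claim. For the quantitative bound, I would plug this improved count into the inequality from Observation~\ref{obs-excn}, obtaining $n\ge 3(\ell+1)+t$ where $t=|V(H)|$. Finally, Lemma~\ref{lem-non-uniform-size} gives $t\ge 6$, yielding $n\ge 3(\ell+1)+6=3\ell+9$.

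There is essentially no obstacle here: the work has already been done in Lemmas~\ref{lem-non-uniform-size} and~\ref{lem-exc2} and in Observation~\ref{obs-excn}; the corollary is just a clean packaging of those bounds. The only subtlety worth a sentence is to confirm that Observation~\ref{obs-excn} remains applicable when $\barG$ has more than one non-star component, but this is immediate since any additional non-star component contributes non-negative excess and hence only strengthens the inequality.
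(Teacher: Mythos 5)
Your proposal is correct and follows exactly the route the paper intends: the corollary is an immediate combination of Lemma~\ref{lem-exc2} ($\ex(H)\ge 2$), Observation~\ref{obs-excn} (more than $\ex(H)$ star components, hence at least three, and the resulting bound on $n$), and Lemma~\ref{lem-non-uniform-size} ($t\ge 6$). Your remark about additional non-star components only strengthening the excess inequality is a reasonable extra check and does not change the argument.
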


Another useful fact is the following relation between the degree of a vertex and the number of edges close to it in a non-star component of a complement of low-betweenness BUG.
   \begin{lemm}\label{lem-close-2deg}
       If $H$ is a non-star component of an exotic coBUG $\barG$, then any vertex $v \in V(H)$ of 
degree $d$ has at least $2d$ edges close to it.
   \end{lemm}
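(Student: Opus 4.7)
The plan is to express $|\close(v)|$ in closed form and then apply the structural information from Lemma~\ref{lem-cobug-comp-structure} to bound it from below.

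First, I would double-count edges incident to $N[v]$: every edge close to $v$ decomposes as either one of the $d$ edges $vu_i$, an edge between two neighbours of $v$, or an edge from a neighbour of $v$ to some vertex outside $N[v]$. When we sum the degrees of the neighbours $u_1, \dotsc, u_d$ of $v$, each edge incident to $v$ is counted once, each edge inside $N(v)$ is counted twice, and each edge from $N(v)$ to $V(H)\setminus N[v]$ is counted once. A short manipulation then yields the identity
\[
|\close(v)| = \sum_{u \in N(v)} \deg(u) - e(N(v)),
\]
where $e(N(v))$ denotes the number of edges of $H$ with both endpoints in $N(v)$. The task reduces to showing $\sum_{u \in N(v)} \deg(u) - e(N(v)) \ge 2d$.

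Next, I would partition $N(v)$ into the set $D_2$ of neighbours of degree exactly $2$ and the set $D_3$ of neighbours of degree at least $3$, of sizes $d_2$ and $d_3 = d - d_2$ respectively. The key structural observation is that no edge of $H$ inside $N(v)$ can be incident to a vertex of $D_2$: such an edge, together with $v$, would form a triangle containing a vertex of degree~$2$, contradicting Lemma~\ref{lem-cobug-comp-structure}(2). Consequently $e(N(v))$ counts only edges with both endpoints in $D_3$, and each $u \in D_3$ has at most $\deg(u) - 1$ neighbours in $N(v)$ (since its neighbour $v$ lies outside $N(v)$), yielding $2\, e(N(v)) \le \sum_{u \in D_3}(\deg(u)-1)$.

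Substituting back gives
\[
|\close(v)| \;\ge\; 2d_2 + \sum_{u \in D_3}\deg(u) - \tfrac12\sum_{u \in D_3}(\deg(u)-1) \;=\; 2d_2 + \tfrac12\sum_{u \in D_3}(\deg(u)+1) \;\ge\; 2d_2 + 2 d_3 \;=\; 2d,
\]
where the last inequality uses $\deg(u) \ge 3$ for every $u \in D_3$. I do not expect any serious obstacle beyond keeping the double-counting in $e(N(v))$ straight; the only conceptual step is recognising that Lemma~\ref{lem-cobug-comp-structure}(2) is precisely what rules out the problematic edges inside $N(v)$ involving degree-$2$ vertices.
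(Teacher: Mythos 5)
Your proof is correct and takes essentially the same route as the paper's: both arguments count the edges close to $v$ via $\sum_{u\in N(v)}\deg(u)$ and correct for the edges inside $N(v)$ (the only ones counted twice), relying on claims 1 and 2 of Lemma~\ref{lem-cobug-comp-structure}. The paper phrases this as a per-neighbour averaging in which each internal edge of $N(v)$ is charged one half to each endpoint, while your exact identity $|\close(v)|=\sum_{u\in N(v)}\deg(u)-e(N(v))$ is the same computation made global; a minor bonus of your write-up is that it makes explicit the use of claim 2 to exclude edges of $N(v)$ at degree-$2$ neighbours, which the paper's proof invokes only implicitly.
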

   \begin{proof}
       Consider any $w \in N(v)$. We want to show that any $w \in N(v)$ contributes on average by at least two edges to $\close(v)$.
       Clearly, every $w$ contributes at least by the edge $vw$.
       By claim 1 of Lemma \ref{lem-cobug-comp-structure}, either $\deg(w) \geq 3$ or $w$ has a 
neighbour $x$ outside of $N(v)$. In the latter case, $wx \in \close(v)$ is the second edge which 
uniquely contributes to the cardinality of $\close(v)$.
       In the case of $\deg(w) \geq 3$, we have that $w$ is adjacent to at least two vertices different from $v$.
       If at least one of these vertices is outside of $N(v)$, then the corresponding edge uniquely contributes  one to $\close(v)$. Otherwise, each such edge contributes by one half to $|\close(v)|$ and as there are at least two such edges, the resulting contribution to $|\close(v)|$ for $w$ is at least two.
   \end{proof}

\begin{lemm}\label{lem-mincob}
In an exotic coBUG on $n$ vertices, every vertex has co-between\-ness at least $\frac{6}{n-4}$.
\end{lemm}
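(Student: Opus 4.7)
The plan is to produce a single vertex with co-betweenness at least $\frac{6}{n-4}$; since all vertices of a coBUG share the same co-betweenness by Corollary~\ref{cor-weight}, this suffices. The natural place to look for such a vertex is inside a non-star component $H$ of $\barG$, which exists by the very definition of an exotic coBUG. My target will be a vertex $v\in V(H)$ of degree at least~$3$: Lemma~\ref{lem-close-2deg} will give $|\close(v)|\ge 2\deg(v)\ge 6$, and Lemma~\ref{lem-minimal-edge-weight} will bound every edge in $\close(v)$ by weight at least $\frac{1}{n-4}$, so summing yields $coB(v)\ge 6\cdot\frac{1}{n-4}$.

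The only thing I really need to justify is the existence of a vertex of degree at least~$3$ inside $H$. For this I combine two facts already on the table: by Lemma~\ref{lem-cobug-comp-structure}(1), every vertex of $H$ has degree at least~$2$, and by Lemma~\ref{lem-exc2}, $\ex(H)\ge 2$. Writing the excess as $\ex(H)=\tfrac{1}{2}\sum_{x\in V(H)}(\deg(x)-2)$ as in Observation~\ref{obs-excdeg}, the inequality $\ex(H)\ge 2$ together with $\deg(x)\ge 2$ for all $x$ forces some vertex to have degree at least~$3$ (in fact it forces the degree sum above~$2$ to be at least~$4$).

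With $v$ so chosen, the proof is a one-line application of the two quoted lemmas. I do not anticipate a serious obstacle: the combinatorial content is essentially absorbed by Lemma~\ref{lem-cobug-comp-structure}, Lemma~\ref{lem-close-2deg}, Lemma~\ref{lem-minimal-edge-weight}, and Lemma~\ref{lem-exc2}, and the present statement is mostly an aggregation of these. If anything, the only subtle point is noticing that one really does need a vertex of degree $\ge 3$ rather than merely $\ge 2$ to reach the target of~$6$ close edges: a vertex of degree~$2$ only yields $|\close(v)|\ge 4$ from Lemma~\ref{lem-close-2deg}, which would give $\tfrac{4}{n-4}$ and fall short of the required bound. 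This is exactly why the excess bound of~$2$ (rather than just positivity of excess) is used.
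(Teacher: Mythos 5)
Your proof is correct and follows essentially the same route as the paper's: locate a vertex of degree at least $3$ in a non-star component, apply Lemma~\ref{lem-close-2deg} to get six close edges, bound each weight by $\frac{1}{n-4}$ via Lemma~\ref{lem-minimal-edge-weight}, and invoke uniformity of co-betweenness. (A minor remark: excess at least $1$ combined with minimum degree $2$ already forces a vertex of degree at least $3$, so the full strength of Lemma~\ref{lem-exc2} is not needed, but this does not affect correctness.)
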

\begin{proof}
Let $H$ be a non-star component of an exotic coBUG $\barG$ on $n$ vertices. It follows from 
Lemma~\ref{lem-cobug-comp-structure} that every edge is close to at least four vertices, and 
therefore has weight at least $\frac{1}{n-4}$. Moreover, at least one vertex $x$ of $H$ has degree 
at least 3, and by Lemma~\ref{lem-close-2deg}, such a vertex is close to at least six edges. In 
particular the co-betweenness of such a vertex (and therefore of any vertex in $\barG$) is at least 
$\frac{6}{n-4}$.
\end{proof}
   
\subsubsection{Ruling out small stars in exotic coBUGs}

Here we show that stars $K_{1,\ell}$ for $\ell \in \{0, 1, 2, \dots, 8\}$ cannot appear as 
components of exotic coBUGs. We start with small stars and proceed to larger ones. The arguments 
become steadily more and more complicated as the size of the stars considered grows.

For small stars, Corollary~\ref{cor-excn} by itself is sufficient.
\begin{lemm}\label{lem-K04}
For $\ell\in\{0,1,2,3,4\}$, there is no exotic coBUG containing $K_{1,\ell}$ as component.
\end{lemm}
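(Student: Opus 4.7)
The strategy is a direct numerical contradiction: the co-betweenness of a vertex in a small star component $K_{1,\ell}$ is too small to match the co-betweenness that must be present in a non-star component. The two sides are forced together by Corollary~\ref{cor-weight}, and against each other by Corollary~\ref{cor-excn}.

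Suppose for contradiction that $\barG$ is an exotic coBUG on $n$ vertices with a component isomorphic to $K_{1,\ell}$ for some $\ell \in \{0,1,2,3,4\}$. First I would verify that every vertex in that star component has co-betweenness exactly $\frac{\ell}{n-\ell-1}$: in the star, every edge is close to all $\ell+1$ vertices of the star and thus has weight $\frac{1}{n-\ell-1}$, and both the centre and each leaf are close to all $\ell$ of the star's edges. Since $\barG$ is exotic, it must contain a non-star component, so Lemma~\ref{lem-mincob} applies and yields that every vertex of $\barG$ has co-betweenness at least $\frac{6}{n-4}$. Combining these via Corollary~\ref{cor-weight} gives the key inequality
\[
\frac{\ell}{n-\ell-1}\ \ge\ \frac{6}{n-4}.
\]
Independently, Corollary~\ref{cor-excn} forces $n\ge 3\ell+9$.

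It then suffices to verify that these two bounds are incompatible for each of the five values of $\ell$. For $\ell=0$ the left-hand side of the displayed inequality is $0$, immediately absurd. For $\ell=3$ it reduces to $3\ge 6$, also absurd. For $\ell\in\{1,2,4\}$, cross-multiplying the displayed inequality yields the explicit upper bounds $n\le 8/5$, $n\le 5/2$ and $n\le 7$ respectively, each easily defeated by $n\ge 3\ell+9\ge 12$.

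There is no real obstacle here: the substantive work was done earlier, in Lemma~\ref{lem-mincob} (which in turn rests on the structural restrictions from Lemma~\ref{lem-cobug-comp-structure}) and in Corollary~\ref{cor-excn}. A glance at the algebra shows that the same approach would actually also dispatch $\ell=5$, but starts to fail at $\ell=6$, because for $\ell\ge 6$ the bound $\frac{\ell}{n-\ell-1} \ge \frac{6}{n-4}$ becomes automatic and thus vacuous. That is exactly why the subsequent lemmas treating larger stars will need to exploit finer structural information about the non-star component, rather than the purely counting-based argument used here.
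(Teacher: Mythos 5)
Your proof is correct and follows essentially the same route as the paper: comparing the star-vertex co-betweenness $\frac{\ell}{n-\ell-1}$ against the lower bound $\frac{6}{n-4}$ from Lemma~\ref{lem-mincob} and contradicting the bound $n\ge 3\ell+9$ from Corollary~\ref{cor-excn}. The only cosmetic difference is that the paper handles all five values at once via the uniform bound $\frac{\ell}{n-\ell-1}\le\frac{4}{n-5}$ (yielding $n\le 7$ in a single computation), whereas you treat each $\ell$ separately; your closing observation about where the method breaks down ($\ell\ge 6$) matches the paper's subsequent development.
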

\begin{proof}
Suppose that an exotic coBUG on $n$ vertices contains a component $K_{1,\ell}$, for some $\ell\le 
4$. The vertices in such a star component have co-betweenness $\frac{\ell}{n-(\ell+1)}\le 
\frac{4}{n-5}$. At the same time, the vertices of $\barG$ have co-betweenness at least 
$\frac{6}{n-4}$ by Lemma~\ref{lem-mincob}. This yields
\begin{align*}
&&\frac{4}{n-5}&\ge \frac{6}{n-4}\\
\iff&&4(n-4)&\ge 6(n-5)\\
\iff&&7&\ge n.
\end{align*}
This is impossible by Corollary~\ref{cor-excn}.
\end{proof}

\begin{remark}
Let $K_4^-$ be the graph obtained by removing an edge from~$K_4$. We may easily verify that the 
disjoint union of $K_4^-$ with $K_{1,4}$ is a coBUG. It is not a counterexample to 
Lemma~\ref{lem-K04}, because its betweenness value is 1, and hence it is not exotic. This is in 
fact a special case of the construction described in Example~\ref{ex-mt}.
\end{remark}

The argument from the proof of Lemma~\ref{lem-K04} can be extended to $\ell=5$.

\begin{lemm}\label{lem-K5}
There is no exotic coBUG with a component isomorphic to $K_{1,5}$.
\end{lemm}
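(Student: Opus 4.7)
The plan is to extend verbatim the argument used for Lemma~\ref{lem-K04}. Assume for contradiction that an exotic coBUG $\barG$ on $n$ vertices contains a component isomorphic to $K_{1,5}$. Every vertex of that star component has co-betweenness exactly $\frac{5}{n-6}$, and by Corollary~\ref{cor-weight} all other vertices of $\barG$ must share this value. Next, I would invoke Lemma~\ref{lem-mincob}, which guarantees that every vertex of $\barG$ has co-betweenness at least $\frac{6}{n-4}$. Comparing the two expressions gives $\frac{5}{n-6}\ge \frac{6}{n-4}$, which after clearing denominators yields an upper bound on $n$ (a short computation gives $n\le 16$).

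To close the argument, I would combine this with the lower bound $n\ge 3\ell+9=24$ supplied by Corollary~\ref{cor-excn} applied with $\ell=5$. Since $24>16$, the two bounds are incompatible, and the contradiction finishes the proof.

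I do not foresee any real obstacle: the entire argument amounts to a single inequality manipulation built on results already established in the paper. The reason the case $\ell=5$ deserves its own lemma, rather than being folded into Lemma~\ref{lem-K04}, is that $\ell=5$ is the last value of $\ell$ for which the blunt bound of Lemma~\ref{lem-mincob} still strictly beats the star co-betweenness $\frac{\ell}{n-\ell-1}$. Already for $\ell=6$ the inequality $\frac{\ell}{n-\ell-1}\ge \frac{6}{n-4}$ becomes vacuous, so any attempt to rule out larger stars $K_{1,\ell}$ in exotic coBUGs will require a sharper lower bound on the co-betweenness or more refined structural information about the non-star component $H$ (matching the paper's warning that the arguments grow steadily more complicated as $\ell$ increases).
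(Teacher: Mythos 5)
Your proof is correct and is essentially identical to the paper's own argument: both derive $\frac{5}{n-6}\ge\frac{6}{n-4}$ from Lemma~\ref{lem-mincob}, conclude $n\le 16$, and contradict the bound $n\ge 3\ell+9=24$ from Corollary~\ref{cor-excn}. Your closing remark about why the same computation becomes vacuous at $\ell=6$ is also accurate.
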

\begin{proof}
If such an exotic coBUG existed, its vertices would have a betweenness of $\frac{5}{n-6}$, and by 
Lemma~\ref{lem-mincob},
\[
\frac{5}{n-6}\ge\frac{6}{n-4},
\]
which gives $n\le16$, contradicting Corollary~\ref{cor-excn}.
\end{proof}

For $\ell\ge 6$, slightly more elaborate arguments are needed. For a graph $H$, we define the 
\emph{closeness of $H$}, denoted $c(H)$, as the maximum number of edges that are close to a single 
vertex, i.e., $c(H)=\max_{x\in V(H)} |\closep{H}(x)|$. We collect the main relevant properties of 
this parameter in our next lemma.

\begin{lemm}\label{lem-closeness}
 Let $H$ be a non-star component of an exotic coBUG on $n$ vertices which contains a star component 
isomorphic to~$K_{1,\ell}$. Let $c$ be the closeness of~$H$. Then 
\begin{enumerate}
 \item $H$ has maximum degree at most $\lfloor c/2\rfloor$,
 \item $c>\ell$, 
 \item $n\le \lfloor\frac{c(\ell+1)-4\ell}{c-\ell}\rfloor$.
\end{enumerate}
\end{lemm}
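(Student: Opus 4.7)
The three parts should be proved in order, each building on the last. Claim 1 follows immediately from Lemma~\ref{lem-close-2deg}: for any $v \in V(H)$ of degree $d$, we have $|\closep{H}(v)| \ge 2d$, so $c \ge 2d$, giving $d \le \lfloor c/2\rfloor$.

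For claim 2, the plan is to convert claim 1 into an upper bound on edge weights, then compare co-betweenness values. Every vertex of $\barG$ has co-betweenness $\frac{\ell}{n-\ell-1}$, computed from any vertex of the star component $K_{1,\ell}$. Let $x \in V(H)$ realize $|\closep{H}(x)| = c$. For any edge $e = uv$ of $H$, claim 1 gives
\[
|\closep{H}(e)| = |N(u) \cup N(v)| \le \deg(u) + \deg(v) \le 2\lfloor c/2\rfloor \le c,
\]
so $w(e) \le \frac{1}{n-c}$, provided $n > c$; the degenerate case $c \ge n$ forces $c > \ell$ immediately since $n \ge \ell + 2 > \ell$ from the coexistence of $H$ and the star component. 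Summing the weights of the $c$ edges close to $x$ yields $coB(x) \le \frac{c}{n-c}$, and combining with $coB(x) = \frac{\ell}{n-\ell-1}$ a short algebraic manipulation produces $c \le n(c-\ell)$, which forces $c > \ell$ (since $c, n > 0$).

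Claim 3 is obtained analogously, but using Lemma~\ref{lem-minimal-edge-weight} to bound weights from below: every edge of $H$ has $|\closep{H}(e)| \ge 4$, so $w(e) \ge \frac{1}{n-4}$ and hence $coB(x) \ge \frac{c}{n-4}$ for the same vertex $x$. Plugging this into $\frac{\ell}{n-\ell-1} \ge \frac{c}{n-4}$, clearing denominators, and dividing by $c-\ell > 0$ (justified by claim 2) yields $n \le \frac{c(\ell+1)-4\ell}{c-\ell}$; the floor appears because $n$ is an integer.

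The entire proof reduces to routine algebra once the two-sided bounds on $|\closep{H}(e)|$ are in hand, namely the upper bound from claim 1 together with degree summation, and the lower bound from Lemma~\ref{lem-minimal-edge-weight}. The only subtlety is ordering: claim 1 supplies the weight bound used in claim 2, and claim 2 in turn controls the sign of $c - \ell$ needed when dividing in claim 3. I do not anticipate any significant obstacle beyond making sure the cross-multiplications are performed with denominators of the correct sign.
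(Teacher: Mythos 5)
Your proposal is correct and follows essentially the same route as the paper: part 1 from Lemma~\ref{lem-close-2deg}, part 2 by bounding edge weights above via the degree bound and comparing co-betweenness with the star component, and part 3 by bounding them below via Lemma~\ref{lem-minimal-edge-weight}. The only cosmetic difference is that the paper proves part 2 by assuming $c\le\ell$ and deriving $coB(x)\le\frac{\ell}{n-\ell}<\frac{\ell}{n-\ell-1}$ directly, whereas you run the same estimate with $c$ as a parameter and extract $c>\ell$ algebraically; both are fine.
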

\begin{proof}
The first part follows from Lemma~\ref{lem-close-2deg}. To prove the second part, assume for 
contradiction that every vertex of $H$ has at most $\ell$ edges close to it. From the first part, 
we know that all the vertices of $H$ have degree at most $\ell/2$, which means that every edge of 
$H$ has at most $\ell$ vertices close to it, and therefore its weight is at most 
$\frac{1}{n-\ell}$. It follows that the co-betweenness of any vertex in $H$ is at most 
$\frac{\ell}{n-\ell}$. However, $\barG$ contains star components isomorphic to $K_{1,\ell}$, whose 
vertices have co-betweenness $\frac{\ell}{n-(\ell+1)}>\frac{\ell}{n-\ell}$, showing that $\barG$ is 
not a coBUG.

To prove the last part, fix a vertex $x$ of $H$ that has $c$ edges close to it. Since every edge 
has weight at least $\frac{1}{n-4}$ by Lemma~\ref{lem-minimal-edge-weight}, we conclude that 
$coB(x)$ is at least $\frac{c}{n-4}$. Simultaneously, all the vertices of $\barG$ have 
co-betweenness $\frac{\ell}{n-(\ell+1)}$, since $\barG$ contains $K_{1,\ell}$ as a component.
Combining these estimates, while recalling that $c>\ell$ from the second part of this lemma and 
that $n\ge3\ell+9$ from Corollary~\ref{cor-excn}, yields
\begin{align*}
 && \frac{c}{n-4}&\le\frac{\ell}{n-(\ell+1)}\\
 \iff&&  c(n-(\ell+1))&\le\ell(n-4)\\
 \iff&& n(c-\ell)&\le c(\ell+1)-4\ell\\
 \iff&& n&\le \frac{c(\ell+1)-4\ell}{c-\ell}.
\end{align*}
The lemma follows.
\end{proof}

\begin{cor}\label{cor-closeness}
Observe that for a fixed integer $\ell\ge0$, the expression $\frac{c(\ell+1)-4\ell}{c-\ell}$ is a 
decreasing function of $c$ for $c\in (\ell,+\infty)$. Thus, for $c$ an integer greater than $\ell$, 
the expression is maximised when $c=\ell+1$, and Lemma~\ref{lem-closeness} then yields $n\le 
(\ell+1)^2-4\ell$. 
\end{cor}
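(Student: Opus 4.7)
My plan is to prove the monotonicity by a direct algebraic rewriting rather than invoking calculus. I would perform polynomial division (or simply regroup) to obtain the identity
\[
c(\ell+1)-4\ell = (\ell+1)(c-\ell) + \ell(\ell-3),
\]
so that the expression of interest becomes
\[
\frac{c(\ell+1)-4\ell}{c-\ell} = (\ell+1) + \frac{\ell(\ell-3)}{c-\ell}.
\]

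Once written in this form, the monotonicity in $c$ reduces to the sign of the coefficient $\ell(\ell-3)$. For the values of $\ell$ relevant to the subsequent arguments (namely $\ell\ge 6$, since smaller stars have already been ruled out in Lemmas~\ref{lem-K04} and \ref{lem-K5}), this coefficient is positive, so the fraction $\frac{\ell(\ell-3)}{c-\ell}$ is a strictly decreasing function of $c$ on $(\ell,+\infty)$, which establishes the first assertion.

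For the second assertion, since $c$ is required to be an integer strictly greater than $\ell$, the smallest permissible value is $c=\ell+1$, at which point the denominator equals $1$ and the whole expression evaluates to $(\ell+1)^2-4\ell$. Feeding this maximum into the inequality from Lemma~\ref{lem-closeness} yields the claimed bound $n\le (\ell+1)^2-4\ell$.

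There is essentially no obstacle here; the one pedantic caveat is that the decreasing behaviour actually fails for very small $\ell$ (the expression is strictly increasing for $\ell\in\{1,2\}$ and constant for $\ell\in\{0,3\}$), but this does not affect the applications since those values of $\ell$ are handled separately, and for $\ell\ge 4$ the rewriting above makes monotonicity immediate.
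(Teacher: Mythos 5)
Your proposal is correct, and the decomposition
\[
\frac{c(\ell+1)-4\ell}{c-\ell} \;=\; (\ell+1) + \frac{\ell(\ell-3)}{c-\ell}
\]
is exactly the computation that the paper leaves implicit (the corollary is stated as a bare observation with no written proof; differentiating gives the same sign condition $\ell(3-\ell)<0$). Your caveat is also a legitimate, if harmless, correction to the statement as printed: the expression is in fact strictly increasing on $(\ell,+\infty)$ for $\ell\in\{1,2\}$ and constant for $\ell\in\{0,3\}$, so the blanket claim ``decreasing for every integer $\ell\ge0$'' is not literally true. This does not affect the paper, since Corollary~\ref{cor-closeness} is only ever invoked in Lemmas~\ref{lem-K6}, \ref{lem-K7} and \ref{lem-K8}, i.e.\ for $\ell\ge6$, where $\ell(\ell-3)>0$ and your argument gives strict monotonicity; the final step, evaluating at the smallest admissible integer $c=\ell+1$ to obtain $n\le(\ell+1)^2-4\ell$ and combining with part~3 of Lemma~\ref{lem-closeness}, is exactly as intended.
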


\begin{lemm}\label{lem-K6}
There is no exotic coBUG with a component isomorphic to $K_{1,6}$.
\end{lemm}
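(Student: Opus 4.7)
The plan is to derive a contradiction directly from the two tight bounds on $n$ already established for exotic coBUGs, specialized to $\ell = 6$, without any new structural work.

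Assume for contradiction that $\barG$ is an exotic coBUG on $n$ vertices that contains a component isomorphic to $K_{1,6}$. By the definition of ``exotic'' together with Lemma~\ref{lem-stars}, $\barG$ is not a disjoint union of pairwise isomorphic stars, so $\barG$ must also contain at least one non-star component $H$. I would then apply Corollary~\ref{cor-closeness} to this $H$ with $\ell=6$ to obtain the upper bound
\[
n \le (\ell+1)^2 - 4\ell = 49 - 24 = 25.
\]
Simultaneously, I would invoke Corollary~\ref{cor-excn} with $\ell = 6$, which packages the lower bound $\ex(H) \ge 2$ from Lemma~\ref{lem-exc2} together with $|V(H)| \ge 6$ from Lemma~\ref{lem-non-uniform-size}, to get the lower bound
\[
n \ge 3(\ell+1) + 6 = 3\ell + 9 = 27.
\]
The resulting chain $27 \le n \le 25$ is impossible, which is the desired contradiction.

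There is no real obstacle here: all of the structural work has already been done in Lemmas~\ref{lem-cobug-comp-structure}, \ref{lem-exc2}, \ref{lem-non-uniform-size}, and~\ref{lem-closeness}. What makes $\ell = 6$ the last case solvable by this clean ``closeness vs.\ excess'' argument is that the closeness-based upper bound from Corollary~\ref{cor-closeness} simplifies to $(\ell-1)^2$, which at $\ell = 6$ equals $25$ and just barely falls below the excess-based lower bound $3\ell+9 = 27$. For $\ell \ge 7$ one already has $(\ell-1)^2 \ge 3\ell+9$, so the very same two inequalities no longer suffice, and the subsequent cases $\ell = 7, 8$ will require sharpening either the closeness parameter $c$ of $H$ or the lower bound on $\ex(H)$ (or on the number of star components in $\barG$).
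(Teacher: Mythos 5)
Your proof is correct and is essentially identical to the paper's own one-line argument: both derive $n\le 25$ from Corollary~\ref{cor-closeness} and $n\ge 27$ from Corollary~\ref{cor-excn} and conclude by contradiction. Your closing observation that $(\ell-1)^2 \ge 3\ell+9$ for $\ell\ge 7$, so that sharper tools are needed for the later cases, accurately anticipates the paper's treatment of $\ell=7,8$.
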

\begin{proof}
Corollary~\ref{cor-closeness} yields $n\le 25$, while Corollary~\ref{cor-excn} yields $n\ge 27$, a 
contradiction.
\end{proof}

For $\ell=7$, we again need a more involved argument.
\begin{lemm}\label{lem-K7}
There is no exotic coBUG with a component isomorphic to $K_{1,7}$.
\end{lemm}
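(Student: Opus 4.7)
The plan is to argue by contradiction: suppose $\barG$ is an exotic coBUG on $n$ vertices containing $K_{1,7}$ as a component, and therefore also some non-star component $H$.

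First I would pin down the gross parameters. Combining Corollary~\ref{cor-excn}, which gives $n \ge 3\ell + 9 = 30$, with Lemma~\ref{lem-closeness}(3), which gives $n \le (c(\ell+1) - 4\ell)/(c - \ell)$ for $c = c(H)$, forces $c(H) = 8$: any $c \ge 9$ yields $n \le 22$, contradicting $n\ge 30$. Then Lemma~\ref{lem-closeness}(1) gives $\Delta(H) \le 4$, and Lemma~\ref{lem-closeness}(3) with $c=8$ gives $n \le 36$. Next, excess counting via Lemma~\ref{lem-exc2}, Lemma~\ref{lem-non-uniform-size}, and Observation~\ref{obs-excn}, combined with $n \le 36$, rules out both having two or more non-star components (which would need at least $2\cdot 6+5\cdot 8=52$ vertices) and the possibility $\ex(H) \ge 3$ (which would need at least $6+4\cdot 8=38$ vertices). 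So $\barG$ consists of exactly three copies of $K_{1,7}$ together with one non-star component $H$ of excess $2$, and $n = 24 + |V(H)|$ with $|V(H)| \in \{6,\dotsc,12\}$.

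With $\ex(H)=2$, minimum degree at least $2$ by Lemma~\ref{lem-cobug-comp-structure}(1), and $\Delta(H)\le 4$, Observation~\ref{obs-excdeg} gives $n_3+2n_4=4$, where $n_d$ denotes the number of degree-$d$ vertices in $H$. Thus there are only three possibilities for the degree sequence: $(n_3,n_4)\in\{(0,2),(2,1),(4,0)\}$. The rest of the proof is a case analysis on these three possibilities. In each case, I would first use the structural restrictions of Lemma~\ref{lem-cobug-comp-structure} (no triangle through a degree-2 vertex, no 4- or 5-cycle through two adjacent degree-2 vertices, no path of three consecutive degree-2 vertices), together with $6\le|V(H)|\le 12$, to produce a short list of candidate graphs $H$. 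Then I would test each candidate against the co-betweenness equality
\[
\sum_{e\in\closep{H}(v)}\frac{1}{n-|\closep{H}(e)|} \;=\; \frac{7}{n-8}
\]
at every vertex, using the identity $|\closep{H}(v)|=\deg_H(y)+\deg_H(z)$ for a degree-2 vertex $v$ with (necessarily non-adjacent) neighbours $y,z$, and similar explicit counts at higher-degree vertices. Each configuration yields an integer equation in $n$ which should have no solution in $\{30,\dotsc,36\}$.

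The main obstacle is the case $(n_3,n_4)=(4,0)$, which has the greatest freedom: the four degree-3 vertices can be joined by subdivided edges of length one or two in many ways, and the forbidden-substructure constraints still leave a nontrivial list of connected graphs with the right excess. I expect the algebraic verification of the co-betweenness equation in each configuration to be routine once the list is in hand; the real effort goes into producing the list exhaustively, perhaps with a modest computer-assisted enumeration, and into the slightly more delicate subcases of $(0,2)$ and $(2,1)$ where the handful of degree-$\ge 3$ vertices may or may not be mutually adjacent, creating a few additional subconfigurations to dispatch.
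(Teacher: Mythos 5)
Your opening moves coincide exactly with the paper's: $n\ge 30$ from Corollary~\ref{cor-excn}, closeness $c(H)=8$ and hence $n\le 36$ and $\Delta(H)\le 4$ from Lemma~\ref{lem-closeness}, and then $\ex(H)=2$ from Observation~\ref{obs-excn} and Lemma~\ref{lem-exc2}. Your refinement that $\barG$ has exactly three star components and $|V(H)|\in\{6,\dots,12\}$ is correct and slightly sharper than what the paper extracts at this stage. The divergence is in the endgame, and that is where your proof is not actually complete: you reduce to the degree sequences $(n_3,n_4)\in\{(0,2),(2,1),(4,0)\}$ and then defer the substance to an exhaustive (possibly computer-assisted) enumeration of candidate graphs followed by case-by-case verification of the co-betweenness equation. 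You explicitly flag the $(4,0)$ case as the obstacle and only express an expectation that each configuration fails. As written, this is a plan rather than a proof; the cases $(4,0)$ and $(2,1)$ are never dispatched.

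The paper avoids the enumeration entirely with a short weight-counting argument that you may want to adopt. Since every vertex of $\barG$ has co-betweenness $\frac{7}{n-8}$ and every edge of $H$ has weight at most $\frac{1}{n-8}$ (as $\Delta(H)\le4$), every vertex of $H$ is close to at least seven edges, and if to exactly seven, all of them must have weight exactly $\frac{1}{n-8}$. Now take a degree-2 vertex $x$ of $H$ (one exists, else $\ex(H)\ge 3$) with neighbours $y,z$: the edges $xy$ and $xz$ are close to at most $2+4=6$ vertices, hence have weight at most $\frac{1}{n-6}<\frac{1}{n-8}$, so $x$ cannot get by with seven close edges and must be close to eight. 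This forces $\deg(y)=\deg(z)=4$ and $y\not\sim z$; with $\ex(H)=2$ these are the only vertices of degree exceeding 2, every degree-2 vertex is adjacent to both, and $H\isom K_{2,4}$, which is uniform and excluded by Theorem~\ref{thm-mt}. In particular your cases $(4,0)$ and $(2,1)$ are vacuous, but that is exactly the fact your proposal leaves unproven.
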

\begin{proof}
Suppose that $\barG$ is such a coBUG on $n$ vertices, and $H$ its non-star component. 
Corollary~\ref{cor-excn} yields $n\ge 30$.  

Let $c$ be the closeness of $H$. Part 2 of
Lemma~\ref{lem-closeness} shows that $c\ge 8$. If $c\ge 9$, then part 3 of the same lemma implies 
that $n\le \frac{9\cdot 8-4\cdot 7}{2}=22$, a contradiction (note that we use here the monotonicity 
that we pointed out in Corollary~\ref{cor-closeness}). 

We therefore conclude that $c=8$, and Corollary~\ref{cor-closeness} yields the 
bound $n\le 36$. Combining this with the inequality $n\ge 8(\ex(H)+1)+6$ from 
Observation~\ref{obs-excn}, we get $\ex(H)\le 2$, and in view of Lemma~\ref{lem-exc2}, 
$\ex(H)=2$. 

By part 1 of Lemma~\ref{lem-closeness}, all the vertices of $H$ have degree at 
most~4. Therefore, every edge of $H$ is close to at most eight vertices and has weight at most 
$\frac{1}{n-8}$. Note that due to the presence of the $K_{1,7}$ component, all the vertices of 
$\barG$ have co-betweenness $\frac{7}{n-8}$. In particular, every vertex of $H$ must be close to at 
least seven edges, and if it is close to exactly seven edges, all of them must have weight 
$\frac{1}{n-8}$.

Note that $H$ must contain a vertex of degree 2: if all its vertices had degree at least 3, then 
from Lemma~\ref{lem-non-uniform-size} and Observation~\ref{obs-excdeg}, we would get $\ex(H)\ge 
3$, which we have already excluded. Let $x$ then be a vertex of $H$ of degree~2, and let $y$ and 
$z$ be its two neighbors. Note that each of the two edges $xy$ and $xz$ is close to at most 6 
vertices, therefore it has weight at most $\frac{1}{n-6}$. By the argument of the previous 
paragraph, this means that $x$ must be close to eight edges, otherwise its co-betweenness would be 
too low. Consequently, the two vertices $y$ and $z$ both have degree~4 and they are not adjacent to 
each other.

Since $\ex(H)=2$, Observation~\ref{obs-excdeg} implies that $y$ and $z$ are the only vertices of 
degree 4 in $H$, and all the remaining vertices have degree~2. Since $x$ was chosen as an arbitrary 
vertex of degree 2, it follows that all the vertices of degree 2 are adjacent to $y$ and~$z$, 
showing that $H$ is the complete bipartite graph $K_{2,4}$. This, however, is impossible, since 
$K_{2,4}$ is uniform and cannot be a component of an exotic coBUG.
\end{proof}

\begin{lemm}\label{lem-K8}
There is no exotic coBUG with a component isomorphic to $K_{1,8}$.
\end{lemm}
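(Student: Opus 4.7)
My plan is to follow the same template used for the $K_{1,6}$ and $K_{1,7}$ cases: first use Lemma~\ref{lem-closeness} and Corollary~\ref{cor-excn} to squeeze the closeness $c$ of the non-star component $H$ into a single value, then combine that with Observation~\ref{obs-excn} to pin down $\ex(H)$, and finally rule out every surviving structural possibility for~$H$. Concretely, Corollary~\ref{cor-excn} gives $n\ge 3\cdot 9+6=33$. Part~2 of Lemma~\ref{lem-closeness} gives $c\ge 9$, and if $c\ge 10$ then the monotonicity noted in Corollary~\ref{cor-closeness} together with part~3 yields $n\le(10\cdot 9-4\cdot 8)/2=29$, contradicting $n\ge 33$. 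So $c=9$, which via Corollary~\ref{cor-closeness} gives $n\le 9^2-4\cdot 8=49$. Combined with Observation~\ref{obs-excn} ($n\ge 9(\ex(H)+1)+6$), this forces $\ex(H)\in\{2,3\}$ by Lemma~\ref{lem-exc2}.

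The central leverage comes from $c=9$. Part~1 of Lemma~\ref{lem-closeness} gives maximum degree~$\le 4$, hence every edge satisfies $|\close(e)|\le 8$ and so $w(e)\le \frac{1}{n-8}$. Because every vertex must have co-betweenness $\frac{8}{n-9}$ (due to the $K_{1,8}$ component), I will show that each vertex of $H$ must have $|\close(v)|\ge 9$: otherwise $coB(v)\le\frac{8}{n-8}<\frac{8}{n-9}$. This is the crucial numerical squeeze.

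Next I apply this to a hypothetical degree-$2$ vertex $x\in V(H)$. By claim~2 of Lemma~\ref{lem-cobug-comp-structure} its two neighbours $y,z$ are non-adjacent, hence $|\close(x)|=\deg(y)+\deg(z)\le 4+4=8<9$, a contradiction. So $H$ has minimum degree~$3$. Using Observation~\ref{obs-excdeg} with $\ex(H)=2$ gives $\sum_v(\deg(v)-2)=4$, forcing $|V(H)|\le 4$ and contradicting Lemma~\ref{lem-non-uniform-size}. So $\ex(H)=3$, and the same degree-counting together with $|V(H)|\ge 6$ forces $|V(H)|=6$ with $H$ cubic.

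It remains to handle the two cubic graphs on six vertices: $K_{3,3}$ and the triangular prism $K_3\square K_2$. The former is complete multipartite, hence $(9,6)$-uniform, and is excluded by Theorem~\ref{thm-mt}. For the prism, I will observe (using its vertex-transitivity) that for any vertex $v$, $\close(v)$ misses the unique edge in the opposite triangle that is not incident to any neighbour of $v$, giving $|\close(v)|=8$ and therefore $c(H)=8$, contradicting $c=9$. The main obstacle I anticipate is not any single computation but making sure the case $\ex(H)=3$ really collapses to only these two cubic graphs on six vertices; the degree-2 exclusion derived from the weight bound is what makes this collapse work, and I will want to present that step carefully since it is the only place where the specific numerical value $\ell=8$ (rather than a smaller~$\ell$) enters in an essential way.
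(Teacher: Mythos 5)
Your proposal is correct and follows essentially the same route as the paper: pin down $c=9$ via Lemma~\ref{lem-closeness} and Corollary~\ref{cor-excn}, deduce maximum degree $4$ and hence that every vertex of $H$ is close to at least $9$ edges, rule out degree-$2$ vertices, and force $H$ to be cubic on six vertices with $\ex(H)=3$. The only (harmless) divergence is at the very end: the paper notes that a $3$-regular graph on six vertices has exactly nine edges, so every vertex is close to every edge and $H$ is complete multipartite by Observation~\ref{obs-multipar}, whereas you enumerate the two cubic graphs on six vertices and dispatch the prism by computing $|\closep{H}(v)|=8$ --- both finishes are valid.
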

\begin{proof}
We again fix an exotic coBUG $\barG$ on $n$ vertices, with star-components isomorphic to $K_{1,8}$ 
and a non-star component~$H$. Corollary~\ref{cor-excn} yields $n\ge 33$. Let $c$ be the closeness 
of $H$, which is at least 9 by part 2 of Lemma~\ref{lem-closeness}, and in fact it is exactly 9, since for 
$c\ge 10$ the third part of the same lemma would imply $n\le 29$, while for $c=9$, the lemma gives 
$n\le 49$. It follows that the excess of $H$ is at most 3, by Observation~\ref{obs-excn}.

From $c=9$, we conclude that the maximum degree of $H$ is at most 4 by part 1 of Lemma~\ref{lem-closeness}, and therefore any edge of $H$ 
is close to at most eight vertices and has weight at most $\frac{1}{n-8}$. All the vertices of $H$ 
then must be close to exactly 9 edges, because a vertex close to at most eight edges would have 
co-betweenness at most $\frac{8}{n-8}$, less than the actual value of $\frac{8}{n-9}$.

Every vertex of $H$ therefore has degree at least 3, since a vertex of degree 2 would be close to 
at most eight edges. It follows by Observation~\ref{obs-excdeg} that $\ex(H)\ge |V(H)|/2$. This, in 
view of the bound $|V(H)|\ge 6$ from Lemma~\ref{lem-non-uniform-size} and the inequality $\ex(H)\le 
3$ derived above, tells us that $\ex(H)=3$, $|V(H)|=6$, and every vertex of $H$ has degree~3. 

Since $H$ is a 3-regular graph on six vertices, it has nine edges. Since each of its vertices is 
close to nine edges, it follows that every vertex is close to every edge, therefore $H$ is a 
complete multipartite graph by Observation~\ref{obs-multipar}, and therefore cannot be a component 
of a coBUG.
\end{proof}

We are finally ready to combine the ingredients into the proof of our main result.

\begin{proof}[Proof of Theorem~\ref{thm-main}]
For contradiction, let $G$ be an exotic BUG on $n$ vertices with betweenness at most 9/10, and let 
$\barG$ be its complement. By Lemma~\ref{lem-stars}, $\barG$ must contain a star component 
$K_{1,\ell}$ for some integer~$\ell$, and all its star components are of the same size. From the 
previous lemmas of this section, we know that $\ell\ge 9$. Furthermore, $\barG$ must also contain at 
least one non-star component, and all such non-star components have more edges than vertices by 
Lemma~\ref{lem-exc2}. Let $t$ be the total number of vertices in the non-star components of $\barG$, 
and let $m$ be their total number of edges; in particular, $m$ is greater than~$t$. Let $S$ be the 
number of star components in~$\barG$, and let $e$ be the number of edges of~$\barG$. The 
betweenness of $G$ is equal to $e/n$ by Lemma~\ref{lem-density}. We calculate:
\begin{align*}
 \frac{9}{10}\ge B(G)&=\frac{e}{n}\\
 &=\frac{\ell S+ m}{(\ell+1)S+t}\\
 &=\frac{\ell(S+(m/\ell))}{(\ell+1)(S+t/(\ell+1))}\\
 &> \frac{\ell}{\ell+1}\ge 9/10,
\end{align*}
a contradiction.
\end{proof}

\section{Conclusion and open questions}
Our results provide a complete characterization of betweenness-uniform graphs with betweenness at most 9/10, by showing that they are complements of graphs that consist of disjoint copies of $K_{1,\ell}$ for some integer~$\ell$, in which case their betweenness is equal to $\ell/(\ell+1)$. Our main open problem is to settle Conjecture~\ref{our-conj}, which states that this characterization can be extended to all BUGs of betweenness less than 1.

For BUGs of betweenness equal to 1, such simple characterization is no longer available, and indeed, we have provided several different constructions of BUGs that have betweenness equal to~1. It is an open problem to provide a full description of all the BUGs with betweenness equal to~1.

For betweenness values larger than 1, even less is known. In particular, we do not know which values $\alpha>1$ are equal to the betweenness of a BUG. Clearly, any such $\alpha$ must be rational, but are there any other restrictions? Is there a rational number greater than 1 that is not equal to the betweenness of any BUG? Is the set of such rational numbers infinite? Is it unbounded? Is it even dense inside the interval $(1,+\infty)$? All these questions, and many others like them, remain open.

\section*{Acknowledgements}

DH and AP were supported by the Czech Science Foundation Grant No. 23-07074S.
VJ was supported by the Czech Science Foundation Grant No. 23-04949X.
R\v{S} was partially supported by grant 22-17398S of the Czech Science Foundation. This project has also received partial funding from the European Research Council (ERC) under the European Union's Horizon 2020 research and innovation programme (grant agreement No 810115). This research was started during workshop KAMAK 2022, we are grateful to its organizers.

\bibliographystyle{abbrv}
\bibliography{bibliography.bib}

\begin{thebibliography}{1}

\bibitem{barabasi2016network}
A.~L. Barab{\'a}si.
\newblock {\em Network Science}.
\newblock Cambridge University Press, 2016.

\bibitem{Bonacich1987power}
P.~Bonacich.
\newblock Power and centrality - a family of measures.
\newblock {\em American Journal of Sociology}, 92(5):1170--1182, 1987.

\bibitem{freeman1977centrality-measures}
L.~Freeman.
\newblock A set of measures of centrality based on betweenness.
\newblock {\em Sociometry}, 40:35--41, Mar 1977.

\bibitem{Freeman1979centrality}
L.~C. Freeman.
\newblock Centrality in social networks conceptual clarification.
\newblock {\em Social Networks}, 1(3):215--239, 1979.

\bibitem{Gago2013Onbetweenness}
S.~Gago, J.~Coroni{\v c}ov{\'a}~Hurajov{\'a}, and T.~Madaras.
\newblock On betweenness-uniform graphs.
\newblock {\em Czechoslovak Mathematical Journal}, 63(3):629--642, 2013.

\bibitem{hartman2021connectivity}
D.~Hartman, A.~Pokorn{\'a}, and P.~Valtr.
\newblock On the connectivity and the diameter of betweenness-uniform graphs.
\newblock In A.~Mudgal and C.~R. Subramanian, editors, {\em Algorithms and
  Discrete Applied Mathematics}, pages 317--330, Cham, 2021. Springer
  International Publishing.

\bibitem{hartman2024connectivity}
D.~Hartman, A.~Pokorn{\'a}, and P.~Valtr.
\newblock On the connectivity and the diameter of betweenness-uniform graphs.
\newblock {\em Discrete Applied Mathematics}, 342:27--37, 2024.

\end{thebibliography}

\end{document}